\documentclass[12pt,leqno]{amsart}

\usepackage[margin=1in]{geometry}
\usepackage{amsmath,amssymb,amsthm,mathtools}
\usepackage{enumitem}
\usepackage{microtype}
\usepackage{color}
\usepackage{hyperref}

\hypersetup{colorlinks=true,linkcolor=blue,citecolor=blue,urlcolor=blue}
\numberwithin{equation}{section}

\newtheorem{theorem}{Theorem}[section]
\newtheorem{proposition}[theorem]{Proposition}
\newtheorem{lemma}[theorem]{Lemma}
\newtheorem{corollary}[theorem]{Corollary}
\theoremstyle{definition}
\newtheorem{definition}[theorem]{Definition}
\theoremstyle{remark}

\newcommand{\Ric}{\operatorname{Ric}}
\newcommand{\Rm}{\operatorname{Rm}}
\newcommand{\sgn}{\operatorname{sgn}}
\newcommand{\arsinh}{\operatorname{arsinh}}
\newcommand{\supp}{\operatorname{supp}}
\newcommand{\Vol}{\operatorname{Vol}}

\newcommand{\EA}{\mathcal{E}_A}

\title{An Anisotropic Entropy for the Rotationally Symmetric Ricci Flow}
\author{Hongxin Guo \and Guozhi Lin}

\address{Department of mathematics, Wenzhou University, Wenzhou, Zhejiang 325035, China.}
\email{guo@wzu.edu.cn}
\address{Department of mathematics, Wenzhou University, Wenzhou, Zhejiang 325035, China.}
\email{13610585481@163.com}

\begin{document}

\begin{abstract}
For a rotationally symmetric Ricci flow
$g(t)=ds^2+f(s,t)^2g_{\mathbb S^{n-1}}$, $n\ge3$, we study the
scale-invariant curvature anisotropy
$A=1-f_s^2+ff_{ss}$ introduced by Angenent--Knopf.
At the critical power $p_*=(n-1)/(n-2)$, with
$\Phi=\sgn(A)|A|^{p_*/2}$, we prove the distributional identity
\[
 (\partial_t-\Delta)|A|^{p_*}
 =-\frac4{n-1}
 \left|\Phi_s+(n-1)\frac{f_s}{f}\Phi\right|^2.
\]
Integration against a conjugate heat kernel gives an anisotropic entropy formula. 
As an application, we obtain
an entropy threshold for Type-I singularities on $\mathbb S^n$:
if the entropy associated with a singular-time kernel is at most $1$
at an earlier regular time, every tangent flow at the corresponding singular
point is a round shrinking sphere. This criterion follows from the
$0/1$ quantization of the singular-time anisotropic density.
\end{abstract}

\keywords{Ricci flow, curvature anisotropy, anisotropic entropy, Type-I singularity}
\subjclass[2010]{Primary 53C44}

\maketitle
\section{Introduction}\label{sec:introduction}
On a rotationally symmetric manifold with time-dependent Riemannian metric
\begin{equation}
 g(t)=ds^2+f(s,t)^2g_{\mathbb S^{n-1}},\qquad n\ge3,
 \label{eq:intro.metric}
\end{equation}
where $s$ is arclength, the Ricci flow equation is equivalent to
\begin{equation}
	f_t=f_{ss}-(n-2)\frac{1-f_s^2}{f}.
	\label{eq:warp.function}
\end{equation}
The radial and tangential sectional curvatures are given by
\[
 K_{\mathrm{rad}}=-\frac{f_{ss}}{f},\qquad
 K_{\mathrm{tan}}=\frac{1-f_s^2}{f^2}.
\]
The scale-invariant curvature pinching quantity
\begin{equation}
	A=f^2(K_{\mathrm{tan}}-K_{\mathrm{rad}})=1-f_s^2+ff_{ss}
	\label{A definition}
\end{equation}
was introduced by Angenent--Knopf~\cite{AK}.
They derived its evolution and proved its bound by the maximum principle. Their later work gives precise
neckpinch asymptotics~\cite{AK07}. The quantity $A$ vanishes on a round sphere and equals $1$ on a
cylinder. It therefore distinguishes these two singularity models. For $A$ we obtain the following new identity.
\begin{theorem}\label{thm:intro.identity}
Let $g(t)$ be a smooth rotationally symmetric Ricci flow and set
$\displaystyle p_*=(n-1)/(n-2)$ and
$\displaystyle\Phi=\sgn(A)|A|^{p_*/2}$. Then
\begin{equation}
 (\partial_t-\Delta)|A|^{p_*}
 =-\frac4{n-1}
 \left|\Phi_s+(n-1)\frac{f_s}{f}\Phi\right|^2
 \label{eq:intro.identity}
\end{equation}
holds in the sense of distributions.
\end{theorem}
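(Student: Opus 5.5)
The plan is to reduce everything to the evolution equation of $A$ itself, and then do a pointwise chain-rule computation for $|A|^{p}$ with $p$ still free. Matching coefficients should force $p=p_*$. A regularization argument then upgrades the pointwise identity on $\{A\ne0\}$ to a distributional one.

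\textbf{Step 1: evolution of $A$.} Write $w=f_s/f$. For functions of $s$ alone, the Laplacian is $\Delta u=u_{ss}+(n-1)w\,u_s$. In arclength coordinates $\partial_t$ and $\partial_s$ do not commute. Under \eqref{eq:warp.function} one has $[\partial_t,\partial_s]=-(n-1)K_{\mathrm{rad}}\,\partial_s$ (the usual $\partial_t\,ds=-\Ric(\partial_s,\partial_s)\,ds$ correction). Using this, I would differentiate $A=1-f_s^2+ff_{ss}$ and substitute \eqref{eq:warp.function} and its $s$-derivatives. The target is a linear equation of Angenent--Knopf type,
\[
(\partial_t-\Delta)A=-c_1\,w\,A_s-c_2\,w^2A .
\]
The coefficient check in Step 2 shows that the identity forces $c_1=4$ and $c_2=4(n-2)$. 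So the computation in Step 1 must reproduce precisely these constants. This bookkeeping, in particular getting the first-order coefficient right once the commutator and the $(n-1)w\,u_s$ drift of $\Delta$ are included, is where I expect the main computational risk. I would double-check it on the explicit cylinder, sphere and Bryant-type profiles.

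\textbf{Step 2: pointwise identity on $\{A\neq0\}$.} Let $u=|A|^p$. Then
\[
(\partial_t-\Delta)u=pA|A|^{p-2}(\partial_t-\Delta)A-p(p-1)|A|^{p-2}A_s^2 .
\]
Inserting Step 1 gives three terms:
\[
-p(p-1)|A|^{p-2}A_s^2,\qquad -4p\,w\,|A|^{p-2}AA_s,\qquad -4(n-2)p\,w^2|A|^p .
\]
On the other side, $\Phi_s=\tfrac p2|A|^{p/2-1}A_s$. Expanding $-\frac4{n-1}\bigl(\Phi_s+(n-1)w\Phi\bigr)^2$ gives
\[
-\tfrac{p^2}{n-1}|A|^{p-2}A_s^2,\qquad -4p\,w|A|^{p-2}AA_s,\qquad -4(n-1)w^2|A|^p .
\]
The cross terms agree. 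The gradient terms agree iff $p-1=p/(n-1)$, and the zeroth-order terms agree iff $(n-2)p=n-1$. Both conditions say exactly $p=p_*$, which explains the critical power.

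\textbf{Step 3: distributional extension across $\{A=0\}$.} Since $p_*>1$, $u=|A|^{p_*}$ is $C^1$. However, $\Phi$ is only Hölder when $n>3$, since then $p_*/2<1$, so the right-hand side must be interpreted with care. Set $u_\varepsilon=(A^2+\varepsilon^2)^{p_*/2}$ and compute $(\partial_t-\Delta)u_\varepsilon$ with the same chain rule. The result is $-\tfrac4{n-1}\bigl|\partial_s\Phi_\varepsilon+(n-1)w\Phi_\varepsilon\bigr|^2$, where $\Phi_\varepsilon=A(A^2+\varepsilon^2)^{p_*/4-1/2}$, plus error terms carrying factors $\varepsilon^2(A^2+\varepsilon^2)^{p_*/2-2}A_s^2$ and $\varepsilon^2 w^2(A^2+\varepsilon^2)^{p_*/2-1}$. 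The second kind of error tends to $0$ locally uniformly. The first kind has a favorable sign, or is controlled by the main gradient term.

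Testing against $\varphi\in C_c^\infty$ and letting $\varepsilon\to0$ then proceeds in three parts. The left side converges because $u_\varepsilon\to u$ in $C^1_{\mathrm{loc}}$. Fatou's lemma, together with the resulting identity, yields $\Phi_s\in L^2_{\mathrm{loc}}$. On $\{A=0\}$ one has $A_s=0$ a.e., so that set contributes nothing, and dominated convergence gives \eqref{eq:intro.identity}. Away from the axis $f>0$, so $w$ is smooth. Near the poles the rotational symmetry and smoothness of $g(t)$ give $A=O(s^2)$, and I expect the identity to pass across the poles by the same cutoff argument.
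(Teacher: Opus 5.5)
\textbf{Overall.} This is essentially the paper's argument. It uses the Angenent--Knopf equation \eqref{eq:2.7}, which the paper cites from \cite{AK} rather than rederiving; your target constants $c_1=4$, $c_2=4(n-2)$ are correct. It then applies the chain rule for $|A|^p$, and your coefficient matching is equivalent to completing the square in Proposition~\ref{prop:3.1}. Finally it regularizes with $(A^2+\varepsilon^2)^{p_*/2}$, uses dominated convergence, and checks the poles.

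\textbf{Commutator sign in Step~1.} You have the sign reversed. From $\partial_t\,ds=-\Ric(\partial_s,\partial_s)\,ds$ one gets
$[\partial_t,\partial_s]=+\Ric(\partial_s,\partial_s)\,\partial_s=(n-1)K_{\mathrm{rad}}\partial_s=-(n-1)\frac{f_{ss}}{f}\partial_s$, as in \eqref{eq:2.1}. With your sign the computation picks up extra terms and cannot reach the target equation. Your sphere test would expose this: it gives $A_t\neq0$ on a round sphere, where $A\equiv0$.

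\textbf{Error terms in Step~3.} Your list is slightly off, though harmlessly.
\begin{itemize}
\item At $p_*$ the $w^2$ terms cancel exactly because $(n-2)p_*=n-1$, so there is no zeroth-order error.
\item There is a mixed error you did not list, $-4(p_*-2)\varepsilon^2wAA_s(A^2+\varepsilon^2)^{p_*/2-2}$. It is bounded by $C\varepsilon^{p_*-1}|wA_s|\to0$, also near a pole, where $wA_s$ is bounded.
\item The gradient error $-(p_*-2)^2\varepsilon^2A_s^2(A^2+\varepsilon^2)^{p_*/2-3}(A^2+\varepsilon^2/p_*)$ does have the favorable sign you claim.
\end{itemize}

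\textbf{Route to the integrability bound.} Getting $|A|^{p_*-2}A_s^2\in L^1_{\mathrm{loc}}$ by applying Fatou to the regularized identity, using that sign, is a valid alternative. The paper instead obtains \eqref{eq:3.7} beforehand by integrating by parts against $G_\varepsilon(A)$. The paper also avoids error bookkeeping altogether. It passes to the limit in $F_\varepsilon'(A)(\partial_t-\Delta)A-F_\varepsilon''(A)A_s^2$ and completes the square only afterwards, almost everywhere.

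\textbf{Poles.} The paper has $A=O(s^4)$ (Lemma~\ref{lem:4.1}), but your $A=O(s^2)$, hence $A_s=O(s)$ by evenness, already suffices. It gives $f^{n-1}\partial_s|A|^{p_*}\to0$, so no pole distribution arises, and $\frac{f_s}{f}\Phi=O(s^{p_*-1})$ is square-integrable. You should still write this out rather than leave it as an expectation.
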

Inspired by Perelman~\cite{Perelman}, we integrate this identity against a conjugate heat kernel to obtain a monotone quantity. 
The same idea was also used by Philipowski, Thalmaier and the first author in~\cite{GPT} to define a new entropy formula.

\begin{theorem}\label{thm:intro.entropy}
Let $g(t)$ be a smooth closed rotational Ricci flow and let $u>0$ solve
$u_t=-\Delta u+Ru$ with $\int_Mu\,d\mu_t=1$. Define
\begin{equation}
 \EA(t;u)=\int_M |A|^{p_*}u\,d\mu_t.
 \label{eq:intro.entropy}
\end{equation}
Then
\begin{equation}
 \frac{d}{dt}\EA(t;u)
 =-\frac4{n-1}\int_M
 \left|\Phi_s+(n-1)\frac{f_s}{f}\Phi\right|^2u\,d\mu_t.
 \label{eq:intro.dissipation}
\end{equation}
\end{theorem}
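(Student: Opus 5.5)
The plan is to obtain Theorem~\ref{thm:intro.entropy} from Theorem~\ref{thm:intro.identity} by the standard conjugate-heat-kernel computation. We combine it with the evolution of the volume form under Ricci flow, $\partial_t d\mu_t=-R\,d\mu_t$. Write $v=|A|^{p_*}$. Then
\[
\frac{d}{dt}\int_M vu\,d\mu_t=\int_M\bigl(v_tu+vu_t-Rvu\bigr)d\mu_t
=\int_M\bigl(v_tu-v\Delta u\bigr)d\mu_t,
\]
using $u_t=-\Delta u+Ru$. Since $M$ is closed, integration by parts gives $\int_M v\Delta u=\int_M u\Delta v$. Hence the derivative equals $\int_M u(\partial_t-\Delta)v\,d\mu_t$. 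Inserting \eqref{eq:intro.identity} yields \eqref{eq:intro.dissipation}. The normalization $\int u=1$ is preserved by the same computation with $v\equiv1$, but it is not needed for the identity itself.

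The only real issue is regularity. The quantity $v=|A|^{p_*}$ is only $C^{1}$-type near the zero set of $A$, since $p_*>1$, and the identity \eqref{eq:intro.identity} holds only distributionally in space-time. I therefore intend to run the argument in integrated form rather than differentiating pointwise.

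\textbf{Step 1.} Fix $t_1<t_2$. Take a test function $\chi(t)$ compactly supported in $(t_1,t_2)$ and pair \eqref{eq:intro.identity} with the smooth positive space-time function $\chi(t)u(x,t)$ against $d\mu_t\,dt$. By the definition of the distributional identity, the left side becomes
\[
-\iint v\bigl(\partial_t(\chi u)-R\chi u+\chi\Delta u\bigr)\,d\mu_t\,dt .
\]
Using the conjugate heat equation, this reduces to $-\int\chi'(t)\EA(t;u)\,dt$.

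\textbf{Step 2.} Conclude that $\frac{d}{dt}\EA$ equals the right-hand side of \eqref{eq:intro.dissipation} in $\mathcal D'(t_1,t_2)$.

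\textbf{Step 3.} Observe that $\EA(t;u)$ is continuous in $t$. This holds because $A$ is smooth and $x\mapsto|x|^{p_*}$ is continuous. Observe also that the dissipation integrand is continuous in $t$. The point here is that $\Phi=\sgn(A)|A|^{p_*/2}$ with $p_*/2>1/2$, so $\Phi$ is locally Lipschitz. Its derivative, $\Phi_s=\tfrac{p_*}{2}|A|^{p_*/2-1}A_s$, may blow up where $A=0$ and $A_s\neq0$ if $p_*<2$. However, the square $|\Phi_s|^2\sim|A|^{p_*-2}A_s^2$ is integrable near a transversal zero exactly when $p_*>1$.

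\textbf{Step 4.} A distributional derivative that is continuous upgrades to a classical one, giving \eqref{eq:intro.dissipation} pointwise in $t$.

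I expect the main obstacle to be Step 3: controlling the singular factor $|A|^{p_*-2}A_s^2$ uniformly near degenerate zeros of $A$, where $A_s$ also vanishes, so as to get continuity or at least local integrability in $t$. My first approach is to regularize. I would replace $|A|^{p_*}$ by $(A^2+\varepsilon^2)^{p_*/2}$ and redo the pointwise computation behind Theorem~\ref{thm:intro.identity}, which produces an $\varepsilon$-version of the identity plus a nonpositive error. I would then integrate against $u$ exactly as above, where everything is smooth, and pass $\varepsilon\to0$. The left side converges by dominated convergence. On the right I would use Fatou or monotone convergence, together with the fact that the zero set of $A$ contributes nothing to the limit: $\Phi_s=0$ a.e.\ on $\{A=0\}$. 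If only an inequality survives, I would combine it with the distributional equality of Step 2 to conclude equality, with the dissipation being a finite $L^1_t$ quantity. Then $\EA$ is absolutely continuous and \eqref{eq:intro.dissipation} holds for a.e.\ $t$, and for every $t$ once continuity is verified.
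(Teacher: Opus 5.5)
Your reduction is the one the paper uses. The relation $\partial_t d\mu_t=-R\,d\mu_t$ and the conjugate heat equation turn $\frac{d}{dt}\mathcal E_A$ into $\int_M u\,(\partial_t-\Delta)|A|^{p_*}\,d\mu_t$, and the square identity is inserted there. Your Steps~1--2 are also correct: pairing the space-time distributional identity with the admissible test function $\chi(t)u$ gives $\mathcal E_A'=-\frac4{n-1}\int_M|D\Phi|^2u\,d\mu_t$ in $\mathcal D'(t_1,t_2)$, hence for almost every $t$.

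The gap is the passage to every $t$, which the theorem asserts. Step~3 is not justified. For $n\ge4$ one has $p_*/2<1$, so $\Phi=\sgn(A)|A|^{p_*/2}$ is only H\"older, not Lipschitz, at a transversal zero of $A$; you note this yourself one line later. Moreover, integrability of $|A|^{p_*-2}A_s^2$ at each time says nothing about continuity of $t\mapsto\int_M|D\Phi|^2u\,d\mu_t$. Your fallback does not close this as phrased. Fatou applied to the regularized identity gives only the one-sided bound $\mathcal E_A'(t)\le-\frac4{n-1}\int_M|D\Phi|^2u\,d\mu_t$. A one-sided inequality at every $t$, together with equality almost everywhere, tells you nothing at the exceptional times.

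The missing observation is that no time-continuity of the dissipation is needed. Since $p_*>1$ and $A$ is smooth on $M$ (also at the poles), $q=|A|^{p_*}$ is $C^1$ in space-time. So $\mathcal E_A$ is classically $C^1$, with $\mathcal E_A'(t)=\int_M\bigl(q_tu+\langle\nabla q,\nabla u\rangle\bigr)\,d\mu_t$, and the one remaining integration by parts can be justified at each fixed $t$. That is what the paper does, using the fixed-time bound \eqref{eq:3.7} of Lemma~\ref{lem:3.3} and dominated convergence.

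Your own regularization gives the same thing if you push it one step. Set $F_\varepsilon(z)=(z^2+\varepsilon^2)^{p_*/2}$ and fix $t$.
\begin{itemize}
\item All other terms of the regularized identity converge, so $\lim_{\varepsilon\to0}\int_M F_\varepsilon''(A)|\nabla A|^2u\,d\mu_t$ is finite.
\item Fatou then gives $\int_M|A|^{p_*-2}|\nabla A|^2u\,d\mu_t<\infty$.
\item For $p_*\le2$ one has $0\le F_\varepsilon''(A)|\nabla A|^2\le p_*|A|^{p_*-2}|\nabla A|^2$, so dominated convergence upgrades the inequality to equality at that $t$.
\end{itemize}
Done this way, Steps~2--4 become unnecessary. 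Working with the smooth function $F_\varepsilon(A)$ on the whole closed manifold also spares you the paper's separate pole-flux check.
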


The integrand in the right hand side of Eq.\eqref{eq:intro.dissipation} 
has a direct geometric interpretation. Writing
\[
 D\Phi:=\Phi_s+(n-1)\frac{f_s}{f}\Phi
       =f^{-(n-1)}\partial_s(f^{n-1}\Phi),
\]
we see that $D\Phi=0$ forces $f^{n-1}\Phi$ to be constant along the
orbit space. A smooth pole makes this constant zero, so $A=0$ and the
sectional curvature is constant. 

Our application  is an entropy criterion excluding cylindrical Type-I
tangents. Suppose that a closed rotationally symmetric Ricci flow on
$\mathbb S^n$ satisfies
$\sup_{\mathbb S^n}|\Rm(g(t))|\le C/(T-t)$ with $T$ the singular time, and let $p$ be a Type-I
singular point. Let $\mathcal U_p$ be the family of singular-time
conjugate heat kernels based at $(p,T)$, as in
Mantegazza--M\"uller~\cite{MM}. For $u\in\mathcal U_p$, Theorem~\ref{thm:intro.entropy}
gives the limit
\begin{equation}
 \Theta_A(p;u)=\lim_{t\uparrow T}\mathcal E_A(t;u).
 \label{eq:intro.density}
\end{equation}
The dissipation formula determines the possible values of this density.

\begin{theorem}[Singular-time density and tangent selection]\label{thm:intro.density}
Let $g(t)$, $0\le t<T$, be a smooth closed rotationally symmetric
Ricci flow on $\mathbb S^n$, $n\ge3$, satisfying the Type-I curvature
bound, and let $p$ be a Type-I singular point. The limit
$\Theta_A(p;u)$ is independent of $u\in\mathcal U_p$. Denoting it by
$\Theta_A(p)$ which is called the singular-time density, one has $\Theta_A(p)\in\{0,1\}$. Every tangent flow at
$p$ is a round shrinking sphere when $\Theta_A(p)=0$, and a round
shrinking cylinder when $\Theta_A(p)=1$.
\end{theorem}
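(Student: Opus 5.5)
The plan is to combine the monotonicity of Theorem~\ref{thm:intro.entropy} with a blow-up analysis in the spirit of Mantegazza--M\"uller~\cite{MM}, using that $A$ is scale invariant.

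\textbf{Step 1: existence of the limit.} For $u\in\mathcal U_p$, Theorem~\ref{thm:intro.entropy} shows that $t\mapsto\EA(t;u)$ is nonincreasing. The Angenent--Knopf maximum principle bound gives $|A|\le C$. Hence $0\le\EA\le C^{p_*}$, and the limit $\Theta_A(p;u)$ exists.

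\textbf{Step 2: parabolic rescaling.} Fix $\lambda_j\to\infty$ and set
\[
g_j(\tau)=\lambda_j\,g(T+\tau/\lambda_j),\qquad u_j(\tau)=\lambda_j^{-n/2}u(T+\tau/\lambda_j),\qquad \tau<0 .
\]
\begin{itemize}
\item The quantity $A$ is scale invariant, since it is built from $f^2$ times curvature. So $\EA(\tau;u_j)=\EA(T+\tau/\lambda_j;u)\to\Theta_A(p;u)$ for every fixed $\tau<0$.
\item By the Type-I bound, Hamilton compactness (with Perelman noncollapsing) and the results of~\cite{MM}, a subsequence converges smoothly on compact sets to a tangent flow $(M_\infty,g_\infty(\tau))$. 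This tangent flow is a nonflat gradient shrinking soliton, nonflat because $p$ is Type-I singular.
\item The kernels $u_j$ converge to the soliton's conjugate heat kernel $u_\infty=(4\pi|\tau|)^{-n/2}e^{-f_\infty}$. The Gaussian upper and lower bounds for singular-time kernels under Type-I flows give uniform tail decay. Together with $|A|\le C$, this lets me pass to the limit:
\[
\EA(\tau;u_\infty)=\Theta_A(p;u)\quad\text{for all }\tau<0 .
\]
\end{itemize}

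\textbf{Step 3: the limit is a rigid soliton.} Since $\EA(\tau;u_\infty)$ is constant in $\tau$, equation~\eqref{eq:intro.dissipation} applied on the limit forces $D\Phi\equiv0$ on the support of $u_\infty>0$, which is everything. Justifying this on a possibly noncompact limit needs the same tail control, used to integrate by parts with a cutoff. Rotational symmetry passes to the limit, either as a warped product over an interval or over $\mathbb R$.

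Then $f^{n-1}\Phi$ is constant in $s$, and I split into cases.
\begin{itemize}
\item \emph{Compact limit.} It has smooth poles, where $f=0$, so the constant is zero. Then $A\equiv0$, the curvature is constant, and the limit is the round shrinking sphere. This gives $\EA=0$.
\item \emph{Limit with one pole, noncompact.} The constant is again zero, so the metric has constant curvature. But a nonflat shrinker of constant curvature must be compact, a contradiction. So this case cannot occur.
\item \emph{Limit with no pole, $\mathbb R\times N$.} The orbit space is all of $\mathbb R$, and $f^{n-1}\Phi=c$. Since $|A|\le C$, we get $|\Phi|$ bounded. If $f$ were unbounded or degenerating, I would argue as follows. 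Shrinking solitons with bounded curvature, rotationally symmetric and noncompact without a pole, are the round cylinder $\mathbb R\times\mathbb S^{n-1}$. This can be cited from the classification of rotational shrinkers (Kotschwar), or derived directly from the soliton ODE combined with $f^{n-1}\Phi=c$. On the cylinder, $f$ is constant and $A\equiv1$, so $\EA=1$.
\end{itemize}

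\textbf{Step 4: conclusion.} Every tangent flow is either a sphere or a cylinder, and correspondingly $\Theta_A(p;u)\in\{0,1\}$. Independence of $u$ follows from the Step 2 identity. By~\cite{MM}, any two $u,u'\in\mathcal U_p$ along the same sequence give limits on the same tangent flow, with the soliton kernel as limit in both cases, so $\Theta_A(p;u)=\Theta_A(p;u')$. Sphere and cylinder cannot both occur as tangent flows at $p$, since each determines the value of the density. Hence the tangent flow type is selected by $\Theta_A(p)$ as claimed.

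The main obstacle is the passage to the limit in Step 2: exchanging the limit with the integral, and transporting the vanishing dissipation to the limit, on a possibly noncompact tangent flow. I would first try to get uniform Gaussian bounds on $u_j$ from the Type-I assumption, as in~\cite{MM}, and combine them with the uniform bound $|A|\le C$.
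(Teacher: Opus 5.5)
\textbf{Gap: the rotational structure of the tangent flow is assumed, not proved.} Your outline otherwise follows the paper's proof: monotone limit, scale invariance of $A$, Gaussian-tail kernel convergence with no loss of mass or anisotropic entropy, zero dissipation on the tangent, rigidity, and the values $0/1$. Your kernel-independence argument is also fine, because $|A_\infty|^{p_*}$ is constant on the limit and $u_\infty$ has unit mass.

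The missing step is the sentence ``Rotational symmetry passes to the limit.'' Your three cases presuppose that the tangent is a warped product $dr^2+F_\infty^2g_{\mathbb S^{n-1}}$ over an interval. Nothing you wrote excludes $F_j(p,\tau)=\sqrt{\lambda_j}\,f(p,T+\tau/\lambda_j)\to\infty$. In that scenario the $O(n)$-orbits meeting any fixed pointed ball have diameters tending to infinity, so the limit inherits no $\mathbb S^{n-1}$-warped-product structure. Then neither $D\Phi$ nor Kotschwar's classification is available. The paper rules this out in Lemma~\ref{lem:8.1}, and the anisotropy bound is essential there:
\begin{itemize}
\item $|F_j'|\le\sqrt{1+CF_j^2}$ spreads $F_j\to\infty$ over every bounded radial interval;
\item hence $K_{\mathrm{tan}}-K_{\mathrm{rad}}=A_j/F_j^2\to0$ and $K_{\mathrm{tan}}\le F_j^{-2}\to0$;
\item so the limit is isotropic, hence of constant curvature $K\le0$ by Schur;
\item $K=0$ contradicts nonflatness, and $K<0$ contradicts the shrinker equation.
\end{itemize}
Even with bounded radii, you still need the pole recentering and gluing of Lemma~\ref{lem:8.3} and Proposition~\ref{prop:8.4}. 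These give genuine smooth poles and full $\mathbb S^{n-1}$ fibers in the limit.

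\textbf{The pole-free case.} Citing Kotschwar is legitimate once the rotational structure is in hand. Note, though, that it classifies the tangent outright, which makes your dissipation argument superfluous; the paper deliberately avoids this. Your fallback via the soliton ODE is not carried out. Moreover, $f^{n-1}\Phi=c$ on a single slice cannot suffice on its own: it integrates to
\[
f_s^2=1-\tfrac{c}{n-1}f^{-2(n-2)}+Ef^2,
\]
which has nonconstant periodic solutions with $f$ bounded between positive constants. The paper's Theorem~\ref{thm:6.2} instead uses $D\Phi=0$ on \emph{every} time slice of the limit flow. Inserting $A=c(t)f^{-2(n-2)}$ into \eqref{eq:2.5} and \eqref{eq:warp.function} gives $f_s^2=\frac{n-2}{2(n-1)}+\kappa(t)f^2$. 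Hence $f_{ss}=\kappa f$ and $A\equiv\frac{n}{2(n-1)}$ wherever $f_s\neq0$, which contradicts $A=c f^{-2(n-2)}$ unless $f_s\equiv0$.

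\textbf{Zero dissipation on the limit.} Deriving zero dissipation from the constancy of $\EA(\tau;u_\infty)$ is a workable alternative to Proposition~\ref{prop:10.1}. It does, however, require the dissipation formula on a possibly noncompact limit: cutoff estimates with Gaussian bounds for $u_\infty$ and $\nabla u_\infty$, played against the at most exponential growth of $F_\infty$ and $\partial_s|A_\infty|^{p_*}$. The paper sidesteps this by transferring the local defect bound \eqref{eq:10.localdefect} from the compact approximants and passing the weak square identity to the limit.
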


\begin{samepage}
\begin{corollary}[Entropy threshold for spherical tangents]\label{cor:intro.threshold}
Under the assumptions of Theorem~\ref{thm:intro.density}, if for some
$u\in\mathcal U_p$ and some regular time $t_0<T$,
\begin{equation}
 \mathcal E_A(t_0;u)\le1,
 \label{eq:intro.threshold}
\end{equation}
then every tangent flow at $p$ is a round shrinking sphere.
In particular, if $\|A(\cdot,t_0)\|_{L^\infty}\le1$, every Type-I
singular point has spherical tangents. A cylindrical tangent
at $p$ requires $\mathcal E_A(t;u)>1$ for every $u\in\mathcal U_p$
and every $t<T$.
\end{corollary}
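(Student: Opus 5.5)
The argument combines monotonicity with the quantization in Theorem~\ref{thm:intro.density}. Fix $u\in\mathcal U_p$ and a regular time $t_0<T$ with $\mathcal E_A(t_0;u)\le 1$.

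\textbf{Step 1: strict monotonicity.} By Theorem~\ref{thm:intro.entropy}, $t\mapsto\mathcal E_A(t;u)$ is nonincreasing on $[t_0,T)$. Hence
\[
\Theta_A(p)=\lim_{t\uparrow T}\mathcal E_A(t;u)\le\mathcal E_A(t_0;u)\le1 .
\]
This alone does not exclude $\Theta_A(p)=1$, so I need strict decrease somewhere on $(t_0,T)$. Suppose instead that $\mathcal E_A(t;u)=\mathcal E_A(t_0;u)$ for all $t\in[t_0,T)$. Then the dissipation in \eqref{eq:intro.dissipation} vanishes identically. Since $u>0$ (a conjugate heat kernel is positive by the strong maximum principle), this gives $D\Phi=f^{-(n-1)}\partial_s(f^{n-1}\Phi)=0$ at every time in $(t_0,T)$. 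So $f^{n-1}\Phi$ is constant in $s$. At a smooth pole $f=0$ while $\Phi$ stays bounded, so the constant is zero. Therefore $A\equiv0$ and $\mathcal E_A(t;u)=0$ on that interval. In particular $\Theta_A(p)=0$.

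\textbf{Step 2: the two cases.} Either the entropy is strictly decreasing on some subinterval, so $\Theta_A(p)<\mathcal E_A(t_0;u)\le1$; or it is constant, and Step 1 gives $\Theta_A(p)=0$. In both cases $\Theta_A(p)<1$. The quantization $\Theta_A(p)\in\{0,1\}$ from Theorem~\ref{thm:intro.density} then forces $\Theta_A(p)=0$, and the same theorem says every tangent flow at $p$ is a round shrinking sphere.

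The one technical point is the regularity needed to read \eqref{eq:intro.dissipation} pointwise. The identity holds for a closed smooth flow, and $\mathcal E_A$ is absolutely continuous on $[t_0,t_1]$ for every $t_1<T$. Therefore constancy makes the integrand vanish for a.e.\ $t$, and continuity of $D\Phi$ away from $\{A=0\}$ is enough to conclude $A\equiv0$ at those times.

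\textbf{The remaining statements.} For the $L^\infty$ clause, if $\|A(\cdot,t_0)\|_{L^\infty}\le1$ then $|A|^{p_*}\le1$, so $\mathcal E_A(t_0;u)\le\int_M u\,d\mu_{t_0}=1$ for every $u\in\mathcal U_p$. The first part then applies at every Type-I singular point $p$. The last sentence is the contrapositive. If $\mathcal E_A(t;u)\le1$ for some $u\in\mathcal U_p$ and some $t<T$, all tangents at $p$ are spherical, so a cylindrical tangent requires $\mathcal E_A(t;u)>1$ for all such $u$ and $t$. This is consistent with Theorem~\ref{thm:intro.density}, since a cylindrical tangent means $\Theta_A(p)=1$ and monotonicity already gives $\mathcal E_A\ge1$.

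I expect the main obstacle to be the rigidity in Step 1: making the pole argument rigorous at points where $\Phi$ is only H\"older (where $A$ changes sign). I would handle it by noting that $f^{n-1}\Phi$ is continuous with vanishing distributional derivative on the open interval between the poles, hence constant.
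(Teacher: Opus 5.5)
Your proposal is correct and is essentially the paper's argument. It combines monotonicity of $\mathcal E_A(\cdot;u)$ with the $0/1$ quantization of $\Theta_A(p)$, and gets the strict inequality from the equality case, where vanishing dissipation, positivity of $u$, and the smooth pole force $f^{n-1}\Phi\equiv0$ and hence $A\equiv0$. The paper only runs it contrapositively: it assumes a cylindrical tangent, so $\Theta_A(p)=1$, reads $\mathcal E_A(t;u)-1$ off the integrated defect identity as the remaining dissipation, and shows this is strictly positive for the same reason.
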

\end{samepage}

The weighted condition~\eqref{eq:intro.threshold} is weaker than a
pointwise bound on $|A|$: it allows larger anisotropy in regions carrying
less mass for the chosen singular-time kernel.

\begingroup
\emergencystretch=2em
Let us summarize the relevant known results. Enders--M\"uller--Topping showed that Type-I blow-ups at singular points
admit nonflat shrinking solitons as subsequential limits~\cite{EMT}.
Kotschwar classified complete rotational shrinkers~\cite{Kotschwar}.
Related approaches to singularities through entropy and local curvature
control appear in~\cite{MM,BDM22,BDM26,Yokota,Hallgren}.
Colding--Minicozzi proved cylindrical tangent uniqueness without
rotational symmetry~\cite{CM25}, and Fang--Li obtained fixed-gauge
convergence~\cite{FangLi}.
\par\endgroup

Our emphasis is the critical square identity,
its exact entropy dissipation, and the resulting threshold criterion.
The proof of Theorem~\ref{thm:intro.density} uses zero-dissipation
rigidity to select the tangent model without invoking the rotational
shrinker classification.

Section~\ref{sec:identity} proves Theorem~\ref{thm:intro.identity},
including the weak extension across zeros and smooth poles.
Section~\ref{sec:entropy} proves Theorem~\ref{thm:intro.entropy} and
develops the pointed representation and zero-dissipation rigidity.
Section~\ref{sec:application} applies these formulas to density
quantization and the entropy threshold. The compactness arguments for
rotational tangent geometry are collected in Appendix~\ref{app:geometry};
kernel convergence, entropy preservation, and the weak limit of the
dissipation identity are proved in Appendix~\ref{app:kernels}.
\subsection*{Acknowledgements} The authors thank ChatGPT for assistance with some of the mathematical discussions and for language polishing during the preparation of this paper.
 All mathematical content, derivations, and conclusions presented herein were carried out and verified by the authors, who take full responsibility for the mathematical content of the paper.

\section{The critical square identity}\label{sec:identity}
\subsection{Rotational evolution and the critical power}
Time derivatives below keep the orbit label $x$ fixed; $\partial_s$ denotes arclength differentiation in the evolving metric. Then
\begin{equation}
 [\partial_t,\partial_s]=-(n-1)\frac{f_{ss}}{f}\partial_s.
 \label{eq:2.1}
\end{equation}
Under a constant parabolic rescaling
$\widetilde g(\tau)=\lambda g(t_0+\tau/\lambda)$, one has
\begin{equation}
 \widetilde s=\sqrt\lambda\,s,\qquad
 \widetilde f=\sqrt\lambda\,f,\qquad
 \widetilde A=A.
 \label{eq:2.4}
\end{equation}
The last equality follows directly from~\eqref{A definition}.

The evolution equation of $A$~\cite[Lemma~3.1]{AK}, written in our $n$-dimensional notation, is
\begin{equation}
 A_t=A_{ss}+(n-5)\frac{f_s}{f}A_s-4(n-2)\frac{f_s^2}{f^2}A.
 \label{eq:2.5}
\end{equation}
For a radial function $v=v(s)$,
\begin{equation}
 \Delta v=v_{ss}+(n-1)\frac{f_s}{f}v_s.
 \label{eq:2.6}
\end{equation}
Hence
\begin{equation}
 (\partial_t-\Delta)A=-4\frac{f_s}{f}A_s-4(n-2)\frac{f_s^2}{f^2}A.
 \label{eq:2.7}
\end{equation}

We now seek the power of $|A|$ whose evolution has no zeroth-order remainder.

\begin{proposition}[Power evolution]\label{prop:3.1}
For every $p>1$, at points where $A\neq 0$,
\begin{align}
 (\partial_t-\Delta)|A|^p
 &=-p(p-1)|A|^{p-2}
 \left(A_s+\frac{2}{p-1}\frac{f_s}{f}A\right)^2 \notag\\
 &\quad -4p\left(n-2-\frac{1}{p-1}\right)\frac{f_s^2}{f^2}|A|^p.
 \label{eq:3.1}
\end{align}
\end{proposition}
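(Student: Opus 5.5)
Away from the zero set of $A$ the function $|A|^p$ is smooth, so the plan is to compute $(\partial_t-\Delta)|A|^p$ by the chain rule, insert the known evolution of $A$, and complete a square.

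\textbf{Step 1: chain rule.} Writing $|A|^p=\varphi(A)$ with $\varphi(a)=|a|^p$, the one-dimensional formula \eqref{eq:2.6} for the radial Laplacian gives
\[
 (\partial_t-\Delta)|A|^p=p|A|^{p-2}A\,(\partial_t-\Delta)A-p(p-1)|A|^{p-2}A_s^2 .
\]
Here $\varphi'(a)=p|a|^{p-2}a$ and $\varphi''(a)=p(p-1)|a|^{p-2}$. The first-order part $(n-1)(f_s/f)\partial_s$ of $\Delta$ obeys the ordinary chain rule, so only $\partial_s^2$ produces the $\varphi''$ term.

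\textbf{Step 2: insert the evolution of $A$.} Substituting \eqref{eq:2.7} and using $|A|^{p-2}A^2=|A|^p$, we get
\[
 -p|A|^{p-2}\Big[(p-1)A_s^2+4\frac{f_s}{f}AA_s+4(n-2)\frac{f_s^2}{f^2}A^2\Big].
\]

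\textbf{Step 3: complete the square in $A_s$.} Since
\[
 (p-1)\Big(A_s+\tfrac{2}{p-1}\tfrac{f_s}{f}A\Big)^2=(p-1)A_s^2+4\tfrac{f_s}{f}AA_s+\tfrac{4}{p-1}\tfrac{f_s^2}{f^2}A^2,
\]
the bracket equals this square plus $4\big(n-2-\tfrac1{p-1}\big)\tfrac{f_s^2}{f^2}A^2$. Multiplying by $-p|A|^{p-2}$ gives exactly \eqref{eq:3.1}.

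The only real work is bookkeeping the coefficients in the cross term, and this check is routine. The restriction to points with $A\neq0$ is needed because $|A|^{p-2}$ may be singular there when $p<2$. At such points $\sgn(A)$ is locally constant, so the chain rule applies without issue. The choice $p=p_*$ makes $n-2-\tfrac1{p-1}=0$, which removes the zeroth-order term. This is the point of the proposition and will later feed into Theorem~\ref{thm:intro.identity}.
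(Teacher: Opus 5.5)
Your proposal is correct and follows the paper's own proof: the chain rule for $(\partial_t-\Delta)$ applied to $|A|^p$, substitution of \eqref{eq:2.7}, and completing the square in $A_s$. Your coefficient bookkeeping matches \eqref{eq:3.1} exactly.
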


\begin{proof}
The chain rule and~\eqref{eq:2.7} give
\begin{align*}
(\partial_t-\Delta)|A|^p
&=-p(p-1)|A|^{p-2}A_s^2
  -4p\frac{f_s}{f}|A|^{p-2}AA_s\\
&\quad-4p(n-2)\frac{f_s^2}{f^2}|A|^p.
\end{align*}
Completing the square yields~\eqref{eq:3.1}.
\end{proof}

The remainder in~\eqref{eq:3.1} vanishes only at
\begin{equation}
 p_*=\frac{n-1}{n-2}.
 \label{eq:3.2}
\end{equation}
Set
\begin{equation}
 \Phi=\sgn(A)|A|^{p_*/2}.
 \label{eq:3.3}
\end{equation}

\begin{theorem}[Critical square identity]\label{thm:3.2}
On regular orbits away from the zero set of $A$,
\begin{equation}
 (\partial_t-\Delta)|A|^{p_*}
 =-\frac{4}{n-1}
 \left|\Phi_s+(n-1)\frac{f_s}{f}\Phi\right|^2.
 \label{eq:3.4}
\end{equation}
Equivalently,
\begin{equation}
 (\partial_t-\Delta)|A|^{p_*}
 =-\frac{4}{n-1}f^{-2(n-1)}
 \left|\partial_s(f^{n-1}\Phi)\right|^2.
 \label{eq:3.5}
\end{equation}
\end{theorem}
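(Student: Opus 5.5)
Specialize Proposition~\ref{prop:3.1} to $p=p_*$. Since $p_*-1=\frac{1}{n-2}$, we have $n-2-\frac{1}{p_*-1}=0$, so the zeroth-order remainder in \eqref{eq:3.1} drops out. The coefficients become $p_*(p_*-1)=\frac{n-1}{(n-2)^2}$ and $\frac{2}{p_*-1}=2(n-2)$. On regular orbits with $A\neq0$ this gives
\[
(\partial_t-\Delta)|A|^{p_*}=-\frac{n-1}{(n-2)^2}|A|^{p_*-2}\left(A_s+2(n-2)\frac{f_s}{f}A\right)^2 .
\]

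The remaining task is to rewrite the right-hand side in terms of $\Phi$. Away from zeros of $A$, $\Phi$ is smooth, and since $\sgn(A)$ is locally constant,
\[
\Phi_s=\frac{p_*}{2}\sgn(A)|A|^{p_*/2-1}\operatorname{sgn}(A)A_s\cdot\sgn(A)=\frac{p_*}{2}|A|^{p_*/2-2}AA_s .
\]
More simply, $\Phi_s=\frac{p_*}{2}\frac{A_s}{A}\Phi$. Therefore
\[
\Phi_s+(n-1)\frac{f_s}{f}\Phi=\frac{p_*}{2}\frac{\Phi}{A}\left(A_s+\frac{2(n-1)}{p_*}\frac{f_s}{f}A\right).
\]
The key coincidence is $\frac{2(n-1)}{p_*}=2(n-2)$, so the bracket is exactly the one appearing in Proposition~\ref{prop:3.1}.

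Squaring, and using $\Phi^2/A^2=|A|^{p_*-2}$, we get
\[
\left|\Phi_s+(n-1)\tfrac{f_s}{f}\Phi\right|^2=\frac{p_*^2}{4}|A|^{p_*-2}\left(A_s+2(n-2)\tfrac{f_s}{f}A\right)^2 .
\]
It remains to check the constant. Since $\frac{4}{n-1}\cdot\frac{p_*^2}{4}=\frac{(n-1)^2}{(n-1)(n-2)^2}=\frac{n-1}{(n-2)^2}$, the two coefficients agree, which proves \eqref{eq:3.4}.

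The equivalent form \eqref{eq:3.5} is the product rule: $\partial_s(f^{n-1}\Phi)=f^{n-1}\bigl(\Phi_s+(n-1)\frac{f_s}{f}\Phi\bigr)$, valid on regular orbits where $f>0$.

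No step is a genuine obstacle; the computation is routine once the critical exponent is fixed. The only points needing care are the sign bookkeeping in differentiating $\Phi=\sgn(A)|A|^{p_*/2}$, which is handled by restricting to a component where $\sgn(A)$ is constant, and checking the two numerical coincidences above. The genuinely delicate extension across the zero set of $A$ and across the poles is not part of this statement; it is deferred to the distributional version in Theorem~\ref{thm:intro.identity}.
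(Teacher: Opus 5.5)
Your proof is correct and is the paper's argument: you specialize Proposition~\ref{prop:3.1} at $p_*$, where the remainder vanishes, match the bracket and the constant through $\frac{2}{p_*-1}=\frac{2(n-1)}{p_*}$ and $\frac{4(p_*-1)}{p_*}=\frac{4}{n-1}$, and obtain \eqref{eq:3.5} from the product rule. One small slip: your first display for $\Phi_s$ has an extra factor $\sgn(A)$, since the correct derivative is $\frac{p_*}{2}|A|^{p_*/2-1}A_s$ and not $\frac{p_*}{2}|A|^{p_*/2-2}AA_s$; the form you actually use afterwards, $\Phi_s=\frac{p_*}{2}\frac{A_s}{A}\Phi$, is correct, so the argument is unaffected.
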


\begin{proof}
At $p=p_*$ the remainder in~\eqref{eq:3.1} vanishes, while
\[
 \Phi_s=\frac{p_*}{2}|A|^{p_*/2-1}A_s,\qquad
 \frac{2}{p_*-1}=\frac{2(n-1)}{p_*},\qquad
 \frac{4(p_*-1)}{p_*}=\frac4{n-1}.
\]
These identities give~\eqref{eq:3.4}; the product rule gives~\eqref{eq:3.5}.
\end{proof}

\subsection{Pole behavior and weak extension}
We establish the regularity at a smooth pole that is required to extend the square identity across it.
At a smooth pole, geodesic normal coordinates with $e_1\perp e_2$ give $f(s,t)^2=s^2g_{22}(se_1,t)$, where rotational symmetry makes $g_{22}(re_1,t)$ smooth and even in signed $r$, with value $1$ at $r=0$. Thus
\begin{equation}
 f=s+a(t)s^3+b(t)s^5+O(s^7).
 \label{eq:4.1}
\end{equation}
The expansion and its derivatives hold uniformly on compact time intervals.

\begin{lemma}[Fourth-order vanishing]\label{lem:4.1}
Near a smooth pole,
\begin{equation}
 A=(10b-3a^2)s^4+O(s^6),\qquad A_s=O(s^3).
 \label{eq:4.2}
\end{equation}
\end{lemma}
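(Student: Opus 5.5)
Substitute the odd expansion \eqref{eq:4.1} directly into the definition \eqref{A definition}, $A=1-f_s^2+ff_{ss}$, and collect powers of $s$. Since $f$ is odd in $s$, $f_s$ is even, $f_{ss}$ is odd, and $ff_{ss}$ is even. Hence $A$ is an even function of $s$ with a Taylor expansion in $s^2$, and it suffices to compute the coefficients of $s^0$, $s^2$, $s^4$ and to bound the remainder.

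From \eqref{eq:4.1},
\[
 f_s=1+3as^2+5bs^4+O(s^6),\qquad f_{ss}=6as+20bs^3+O(s^5).
\]
Squaring the first gives
\[
 f_s^2=1+6as^2+(9a^2+10b)s^4+O(s^6).
\]
Multiplying $f$ and $f_{ss}$ gives
\[
 ff_{ss}=(s+as^3)(6as+20bs^3)+O(s^6)=6as^2+(20b+6a^2)s^4+O(s^6).
\]
Therefore
\[
 A=1-1-6as^2-(9a^2+10b)s^4+6as^2+(20b+6a^2)s^4+O(s^6).
\]
The constant terms cancel because the pole is smooth ($f_s=1$ there). The $s^2$ terms cancel exactly. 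What remains is $(10b-3a^2)s^4+O(s^6)$, which is the claimed formula.

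For $A_s$, differentiate the expansion rather than the asymptotic identity. Because the expansion \eqref{eq:4.1} holds together with its derivatives, as stated after \eqref{eq:4.1}, the remainder $O(s^7)$ in $f$ contributes $O(s^6)$ to $f_s$ and $O(s^5)$ to $f_{ss}$. Differentiating $A$ therefore gives $A_s=4(10b-3a^2)s^3+O(s^5)=O(s^3)$. Equivalently, one can use $A_s=-2f_sf_{ss}+f_sf_{ss}+ff_{sss}=ff_{sss}-f_sf_{ss}$ and expand it directly. The cancellations at orders $s$ and $s^3$ then come from the same bookkeeping: $ff_{sss}$ and $f_sf_{ss}$ both equal $6as+O(s^3)$.

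Uniformity on compact time intervals follows because $a(t)$, $b(t)$ and the remainder constants are uniform there. The only step needing care is that the remainder estimates survive differentiation. I would justify this from the smoothness and evenness in $r$ of $g_{22}(re_1,t)$: this makes $f/s$ a smooth even function of $s$, so Taylor's theorem with derivatives of the remainder applies directly.
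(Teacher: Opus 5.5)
Your proposal is correct and follows the paper's own argument: you substitute the differentiated odd expansion of $f$ into $A=1-f_s^2+ff_{ss}$, and the constant and $s^2$ terms cancel, leaving $(10b-3a^2)s^4+O(s^6)$. Your treatment of $A_s$, either through $A_s=ff_{sss}-f_sf_{ss}=4(10b-3a^2)s^3+O(s^5)$ or through the smoothness and evenness of $f/s$, simply writes out the derivative bookkeeping that the paper's one-line proof leaves implicit.
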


\begin{proof}
Expand
\[
 f_s=1+3as^2+5bs^4+O(s^6),\qquad
 f_{ss}=6as+20bs^3+O(s^5),
\]
and substitute into $A=1-f_s^2+ff_{ss}$. The quadratic terms cancel and~\eqref{eq:4.2} follows.
\end{proof}

\begin{lemma}[Weak form across the zero set]\label{lem:3.3}
Let $p_*=(n-1)/(n-2)>1$ and $\Phi=\sgn(A)|A|^{p_*/2}$. Then $\Phi\in W^{1,2}_{\mathrm{loc}}$ in the orbit variable and~\eqref{eq:3.4} holds in the sense of distributions across $\{A=0\}$. In particular,
\begin{equation}
 D\Phi:=\Phi_s+(n-1)\frac{f_s}{f}\Phi\in L^2_{\mathrm{loc}},
 \label{eq:3.6}
\end{equation}
and the right-hand side of~\eqref{eq:3.4} is the locally integrable function
$-\frac{4}{n-1}|D\Phi|^2$.
\end{lemma}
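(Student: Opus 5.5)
The argument has two parts: Sobolev regularity of $\Phi$, and a distributional extension of \eqref{eq:3.4} across $\{A=0\}$ by regularizing $|A|^{p_*}$.

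\textbf{Regularity of $\Phi$.} Since $p_*>1$, the map $\psi(a)=\sgn(a)|a|^{p_*/2}$ is $C^{0,\alpha}$. Because $p_*/2$ may be below $1$ when $n\ge 4$, $\psi$ need not be Lipschitz, so I will not argue by a chain rule for Lipschitz maps. Instead I read membership of $\Phi_s$ in $L^2$ off the identity itself. Away from $\{A=0\}$ we have $\Phi_s^2=\tfrac{p_*^2}{4}|A|^{p_*-2}A_s^2$. By Proposition~\ref{prop:3.1}, the quantity $|A|^{p_*-2}(A_s+\tfrac{2}{p_*-1}\tfrac{f_s}{f}A)^2$ is controlled by $(\partial_t-\Delta)|A|^{p_*}$. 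So I will work with a regularization $\eta_\varepsilon(A)=(A^2+\varepsilon^2)^{p_*/2}$ and apply the chain rule to $\eta_\varepsilon$ together with \eqref{eq:2.7}:
\[
 (\partial_t-\Delta)\eta_\varepsilon(A)=-\eta_\varepsilon''(A)A_s^2+\eta_\varepsilon'(A)\Big(-4\tfrac{f_s}{f}A_s-4(n-2)\tfrac{f_s^2}{f^2}A\Big).
\]
Here $\eta_\varepsilon''\ge c\,(A^2+\varepsilon^2)^{p_*/2-1}\ge 0$, with $c=c(p_*)>0$ coming from the explicit formula (note $\eta_\varepsilon''=p_*(A^2+\varepsilon^2)^{p_*/2-2}((p_*-1)A^2+\varepsilon^2)$). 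The cross term is absorbed into the good term by Cauchy--Schwarz, using $|\eta_\varepsilon'(A)|\le p_*|A|(A^2+\varepsilon^2)^{p_*/2-1}$; this leaves a remainder bounded by $C\tfrac{f_s^2}{f^2}(A^2+\varepsilon^2)^{p_*/2}$.

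Next I integrate against a nonnegative test function $\varphi$ supported on a compact set of regular orbits, on which $f\ge c>0$, and move $\partial_t-\Delta$ onto $\varphi$ (with the volume-form time derivative $-R\,d\mu$). This gives, uniformly in $\varepsilon$,
\[
 \int (A^2+\varepsilon^2)^{p_*/2-1}A_s^2\,\varphi\,d\mu\,dt\le C(\varphi).
\]
Fatou then yields $|A|^{p_*-2}A_s^2\mathbf 1_{\{A\ne0\}}\in L^1_{\mathrm{loc}}$. Since $\Phi=\psi(A)$ is continuous and is the limit of $\psi_\varepsilon(A)$ with $\psi_\varepsilon(a)=a(a^2+\varepsilon^2)^{p_*/4-1/2}$, whose derivatives $\psi_\varepsilon'(A)A_s$ are bounded in $L^2_{\mathrm{loc}}$, weak compactness shows $\Phi\in W^{1,2}_{\mathrm{loc}}$. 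Moreover $\Phi_s=0$ a.e.\ on $\{A=0\}$, by the standard fact that Sobolev functions have zero derivative a.e.\ on level sets. This gives \eqref{eq:3.6} on regular orbits.

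\textbf{Distributional identity.} To pass to the limit in the identity, I will write the regularized equation in completed-square form, mimicking Theorem~\ref{thm:3.2}. Let $w_\varepsilon=(A^2+\varepsilon^2)^{p_*/4}$-type expressions play the role of $\Phi$, so that the right-hand side becomes $-\tfrac4{n-1}|D\Phi_\varepsilon|^2$ plus error terms. These error terms are supported where $|A|\lesssim\varepsilon$ and are bounded by $C\varepsilon^{p_*}$ plus $C(A^2+\varepsilon^2)^{p_*/2-1}A_s^2\mathbf 1_{\{|A|\le\varepsilon\}}$. The second piece tends to zero in $L^1_{\mathrm{loc}}$ by dominated convergence, since on $\{A=0\}$ we have $A_s=0$ a.e.\ ($A$ is smooth). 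The left side converges distributionally because $\eta_\varepsilon(A)\to|A|^{p_*}$ locally uniformly.

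\textbf{Poles.} At a smooth pole, Lemma~\ref{lem:4.1} gives $A=O(s^4)$ and $A_s=O(s^3)$, so $|\Phi|\lesssim s^{2p_*}$ and $|D\Phi|\lesssim s^{2p_*-1}$. These are square-integrable against $f^{n-1}ds\sim s^{n-1}ds$. I will handle the pole by a cutoff $\chi(s/\delta)$: the boundary terms produced by $\Delta$ are $O(\delta^{-1}\cdot\delta^{4p_*-1}\cdot\delta^{n-1})\to0$, so the distributional identity holds across the pole as well.

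\textbf{Main obstacle.} The main obstacle is the uniform absorption of the cross term near $\{A=0\}$ when $p_*<2$, where $\eta_\varepsilon''$ blows up. This requires checking that the constants from Cauchy--Schwarz stay bounded as $\varepsilon\to0$. It works because $|\eta_\varepsilon'|^2/\eta_\varepsilon''\lesssim (A^2+\varepsilon^2)^{p_*/2}$.
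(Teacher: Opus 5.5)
Your outline matches the paper's: regularize by $(A^2+\varepsilon^2)^{p_*/2}$, bound $(A^2+\varepsilon^2)^{p_*/2-1}A_s^2$ uniformly, pass to the limit by dominated convergence, and rule out a pole flux via $q_s=O(s^{4p_*-1})$. However, the pole step fails as written.

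\textbf{The pole step.} You claim $|D\Phi|\lesssim s^{2p_*-1}$ from Lemma~\ref{lem:4.1}, but only the lower-order part $\frac{f_s}{f}\Phi=O(s^{2p_*-1})$ follows. For $n\ge4$ one has $p_*<2$, and $\Phi_s=\frac{p_*}{2}|A|^{p_*/2-1}A_s$ carries a negative power of $|A|$. So $A=O(s^4)$ and $A_s=O(s^3)$ give a pointwise bound only when $|A|\gtrsim s^4$, i.e.\ when $10b-3a^2\neq0$. If this coefficient vanishes, or is merely small compared with the $s^6$ term (for instance as it changes sign in time), $A$ has transversal zeros $s_0$ arbitrarily close to the pole. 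At such a zero $|\Phi_s|\sim|s-s_0|^{p_*/2-1}\to\infty$, so no bound $Cs^{2p_*-1}$ holds, let alone one uniform in time. Your integrated bound was proved only for test functions supported where $f\ge c>0$. Hence the square-integrability of $D\Phi$ across the pole is left unproved. That integrability is part of the statement; it is what makes the right-hand side of \eqref{eq:3.4} a locally integrable function there, and your $\delta\to0$ cutoff limit relies on it.

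The fix is to carry the integrated estimate through the pole. The paper runs it in the signed pole coordinate, where $A$ is smooth and even. Your own spacetime estimate also survives if $\varphi$ is allowed to be nonzero at the pole: the remainder $\frac{f_s^2}{f^2}(A^2+\varepsilon^2)^{p_*/2}=O(s^{-2})$ is integrable against $s^{n-1}ds$ for $n\ge3$. Alternatively, monotone convergence in your cutoff argument gives $\int|D\Phi|^2\varphi<\infty$, because the left-hand side converges.

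\textbf{Smaller points.} You extract the bound on $|A|^{p_*-2}A_s^2$ from the parabolic equation, integrated in spacetime. That is why absorbing the cross term becomes your ``main obstacle'', and it yields $\Phi(\cdot,t)\in W^{1,2}_{\mathrm{loc}}$ only for almost every $t$. The bound is in fact elliptic. With $G_\varepsilon(z)=z(z^2+\varepsilon)^{(p_*-2)/2}$, the paper integrates by parts on a single slice: $(p_*-1)\int\eta(A^2+\varepsilon)^{(p_*-2)/2}A_s^2\,ds\le-\int(\eta A_s)_sG_\varepsilon(A)\,ds$. This uses only smoothness of $A$ and holds on every time slice, uniformly on compact time intervals. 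That slice-wise form is what ``in the orbit variable'' means, and it is how the lemma is used in Theorem~\ref{thm:6.2} and Proposition~\ref{prop:10.1}.

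Your regularization errors are also not confined to $\{|A|\lesssim\varepsilon\}$: terms of size $\varepsilon^2(A^2+\varepsilon^2)^{p_*/2-2}A_s^2$ appear wherever $A_s\neq0$. In addition, passing to the limit in $|D\Phi_\varepsilon|^2$ needs strong, not weak, $W^{1,2}_{\mathrm{loc}}$ convergence. Both issues are settled by dominated convergence with the majorant $C|A|^{p_*-2}A_s^2$. The paper does exactly this: it passes to the limit in the uncompleted chain rule and completes the square afterwards.
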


\begin{proof}
For $p_*=2$, the chain rule and~\eqref{eq:3.7} are classical; the pole check below still applies. For $p=p_*\in(1,2)$, take $0\le\eta\in C_c^1(J)$ on a compact orbit interval and set
\[
 G_\varepsilon(z)=z(z^2+\varepsilon)^{(p-2)/2}.
\]
Since $G_\varepsilon'\ge(p-1)(z^2+\varepsilon)^{(p-2)/2}$ and $|G_\varepsilon(z)|\le |z|^{p-1}$, integration by parts gives
\[
 (p-1)\int_J \eta(A^2+\varepsilon)^{(p-2)/2}A_s^2\,ds
 \le-\int_J(\eta_sA_s+\eta A_{ss})G_\varepsilon(A)\,ds\le C_\eta.
\]
On compact time intervals the derivatives of $A$ are bounded, so $C_\eta$
can be chosen independently of time. Monotone convergence in the orbit
variable and then integration over a compact time interval yield
\begin{equation}
 |A|^{p-2}A_s^2\in L^1_{\mathrm{loc}}(J).
 \label{eq:3.7}
\end{equation}

Put $F_\varepsilon(z)=(z^2+\varepsilon)^{p/2}$. Its parabolic chain rule is
\[
 (\partial_t-\Delta)F_\varepsilon(A)
 =F_\varepsilon'(A)(\partial_t-\Delta)A
 -F_\varepsilon''(A)A_s^2,
\]
where $|F_\varepsilon'(A)|\le p|A|^{p-1}$ and
$0\le F_\varepsilon''(A)A_s^2\le p|A|^{p-2}A_s^2$ almost everywhere; the exceptional zeros with $A_s\ne0$ have measure zero. By~\eqref{eq:3.7}, dominated convergence gives, distributionally,
\[
 (\partial_t-\Delta)|A|^p
 =p|A|^{p-2}A(\partial_t-\Delta)A
 -p(p-1)|A|^{p-2}A_s^2.
\]
Completing the square as in Proposition~\ref{prop:3.1} proves~\eqref{eq:3.4}.
The same estimate applies in a signed pole coordinate: the smooth even
normal-coordinate extension described before Lemma~\ref{lem:4.1} makes
$A$ smooth there. Lemma~\ref{lem:4.1} also gives
$q_s=O(s^{4p_*-1})$ for $q=|A|^{p_*}$, so $f^{n-1}q_s\to0$ and no
pole distribution arises.

For $p\in(1,2)$, let
$\Phi_\varepsilon=A(A^2+\varepsilon)^{(p-2)/4}$.
Then $\Phi_\varepsilon\to\Phi$ pointwise and
$|\partial_s\Phi_\varepsilon|^2\le |A|^{p-2}A_s^2$
almost everywhere. By~\eqref{eq:3.7}, dominated convergence gives
$\Phi_\varepsilon\to\Phi$ in $W^{1,2}_{\mathrm{loc}}$ on compact time intervals.
For either value of $p_*$, almost everywhere,
\[
 \partial_s\Phi=\frac{p_*}{2}|A|^{p_*/2-1}A_s,
\]
so~\eqref{eq:3.7} gives $\Phi_s\in L^2_{\mathrm{loc}}$. At a pole, Lemma~\ref{lem:4.1} gives
\[
 \frac{f_s}{f}\Phi=O\bigl(s^{2p_*-1}\bigr),
\]
which is square-integrable. Hence $D\Phi\in L^2_{\mathrm{loc}}$ and the right-hand side of~\eqref{eq:3.4} is locally integrable.
\end{proof}

\section{The anisotropic entropy and equality case}\label{sec:entropy}
\subsection{Entropy dissipation and pointwise consequences}
Let $u>0$ solve the conjugate heat equation
\begin{equation}
 u_t=-\Delta u+Ru,\qquad \int_M u\,d\mu=1.
 \label{eq:5.1}
\end{equation}
Since $\partial_t d\mu=-R\,d\mu$, integration by parts gives
\begin{equation}
 \frac{d}{dt}\int_M \phi u\,d\mu
 =\int_M (\partial_t-\Delta)\phi\,u\,d\mu.
 \label{eq:5.2}
\end{equation}

\begin{definition}\label{def:3.2}
Define the anisotropic entropy
\begin{equation}
 \mathcal{E}_A(t;u)=\int_M |A|^{p_*}u\,d\mu.
 \label{eq:5.3}
\end{equation}
\end{definition}

\begin{theorem}\label{thm:5.2}
Let $g(t)$ be a smooth closed rotational Ricci flow on a time interval, and let $u>0$ solve~\eqref{eq:5.1} there. Then
\begin{equation}
 \frac{d}{dt}\mathcal{E}_A(t;u)
 =-\frac{4}{n-1}\int_M
 \left|\Phi_s+(n-1)\frac{f_s}{f}\Phi\right|^2u\,d\mu.
 \label{eq:5.4}
\end{equation}
In particular, $\mathcal{E}_A$ is nonincreasing.
\end{theorem}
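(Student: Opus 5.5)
The plan is to combine the general conjugate-heat-kernel identity \eqref{eq:5.2} with the distributional square identity of Lemma~\ref{lem:3.3}. Formally, taking $\phi=|A|^{p_*}$ in \eqref{eq:5.2} and substituting \eqref{eq:3.4} gives \eqref{eq:5.4} at once. Monotonicity then follows because $u>0$ and the integrand is a square. The real work is to justify \eqref{eq:5.2} for $\phi=|A|^{p_*}$. This function is only $C^{1}$ (indeed $C^{1,p_*-1}$) across the zero set of $A$ when $p_*\in(1,2)$, and $(\partial_t-\Delta)\phi$ exists only as the $L^1_{\mathrm{loc}}$ function supplied by Lemma~\ref{lem:3.3}.

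To handle this, I will regularize exactly as in the proof of Lemma~\ref{lem:3.3}, setting $\phi_\varepsilon=F_\varepsilon(A)=(A^2+\varepsilon)^{p_*/2}$.
\begin{enumerate}[label=(\roman*)]
\item Since $A$ is smooth on $M\times[t_1,t_2]$, including at the poles by the even normal-coordinate extension and Lemma~\ref{lem:4.1}, so is $\phi_\varepsilon$. Hence \eqref{eq:5.2} holds classically for it. Integrating in time gives
\[
\int_M\phi_\varepsilon u\,d\mu\Big|_{t_1}^{t_2}=\int_{t_1}^{t_2}\!\!\int_M\bigl(F_\varepsilon'(A)(\partial_t-\Delta)A-F_\varepsilon''(A)A_s^2\bigr)u\,d\mu\,dt .
\]
\item I will pass to the limit $\varepsilon\to0$. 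The left side converges by uniform convergence $\phi_\varepsilon\to|A|^{p_*}$ on the compact manifold. On the right, $|F_\varepsilon'(A)|\le p_*|A|^{p_*-1}$ is bounded and $(\partial_t-\Delta)A$ is smooth. Also $0\le F_\varepsilon''(A)A_s^2\le p_*|A|^{p_*-2}A_s^2$, which is integrable against the bounded weight $u$ on the compact space-time region by \eqref{eq:3.7}. Pointwise a.e.\ convergence holds off $\{A=0,\ A_s\neq0\}$, a null set. By dominated convergence the right side tends to $\int\!\!\int\bigl(p_*|A|^{p_*-2}A(\partial_t-\Delta)A-p_*(p_*-1)|A|^{p_*-2}A_s^2\bigr)u$.
\item Completing the square pointwise a.e. as in Proposition~\ref{prop:3.1}, and using $p=p_*$, turns this integrand into $-\frac4{n-1}|D\Phi|^2u$. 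On $\{A=0\}$ both sides vanish a.e.
\end{enumerate}

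This yields the integrated identity
\[
\mathcal E_A(t_2;u)-\mathcal E_A(t_1;u)=-\frac4{n-1}\int_{t_1}^{t_2}\!\!\int_M|D\Phi|^2u\,d\mu\,dt .
\]

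Finally, I will upgrade the integrated identity to the pointwise derivative \eqref{eq:5.4}. For this I need $t\mapsto\int_M|D\Phi|^2u\,d\mu$ to be continuous. The estimate in Lemma~\ref{lem:3.3} is uniform in time on compact intervals, and $\Phi(\cdot,t)\to\Phi(\cdot,t_0)$ in $W^{1,2}$. I expect to obtain this continuity by applying the same dominated-convergence argument with the $\varepsilon$-regularization uniform in $t$. Without it, \eqref{eq:5.4} still holds for a.e.\ $t$ and in integrated form, which already gives monotonicity.

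The main obstacle is the domination step (ii) near the poles, where the measure $d\mu=f^{n-1}ds\,d\sigma$ degenerates. I plan to treat pole neighborhoods in the signed coordinate of Lemma~\ref{lem:3.3}. There, $A=O(s^4)$ gives $\frac{f_s}{f}\Phi=O(s^{2p_*-1})$, and the boundary terms of the integration by parts vanish because $f^{n-1}\partial_s\phi_\varepsilon\to0$. Together these ensure no singular contribution.
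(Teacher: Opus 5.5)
Your proposal is correct and is essentially the paper's argument. The paper differentiates $\int_M q\,u\,d\mu$ with $q=|A|^{p_*}$ directly (legitimate because $q$ is $C^1$), integrates by parts, and invokes the weak identity of Lemma~\ref{lem:3.3}; that lemma's proof is exactly your $F_\varepsilon$-regularization and dominated-convergence step, and the paper adds the same pole-flux check. Your final upgrade step is easier than you fear: since $q\in C^1$, $\mathcal E_A$ is differentiable at every $t$ with derivative $\int_M(q_tu+\langle\nabla q,\nabla u\rangle)\,d\mu$, and running your regularization on a single time slice (where \eqref{eq:3.7} still holds) identifies this with the right side of \eqref{eq:5.4} at every $t$, so you need no separate continuity argument for $t\mapsto\int_M|D\Phi|^2u\,d\mu$.
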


\begin{proof}
Put $q=|A|^{p_*}$. Since $p_*>1$, $q$ is $C^1$ even where $A=0$. Using~\eqref{eq:5.1} and $\partial_t d\mu=-R\,d\mu$, we obtain
\begin{align*}
 \frac{d}{dt}\mathcal{E}_A(t;u)
 &=\int_M(q_tu+qu_t-Rqu)\,d\mu\\
 &=\int_M(q_tu-q\Delta u)\,d\mu\\
 &=\int_M(q_t-\Delta q)u\,d\mu\\
 &=-\frac{4}{n-1}\int_M|D\Phi|^2u\,d\mu.
\end{align*}
The last two equalities hold weakly across $\{A=0\}$ by Lemma~\ref{lem:3.3}. At a smooth pole, Lemma~\ref{lem:4.1} gives $q=O(r^{4p_*})$, $q_r=O(r^{4p_*-1})$ and $f=O(r)$, where $r$ is the distance to the pole. Since $u$ and its first derivatives are bounded, the boundary flux satisfies
\[
 f^{n-1}(q\,\partial_r u-uq_r)
 =O(r^{n+4p_*-2})\longrightarrow0.\qedhere
\]
\end{proof}

\noindent\textbf{Pointed entropy representation.}
For $x\in M$ and $0\le t_0<t_1<T$, let
$H_{(x,t_1)}(y,t)$ be the conjugate heat kernel based at $(x,t_1)$.
Apply Theorem~\ref{thm:5.2} on $[t_0,t_1-\varepsilon]$ and let
$\varepsilon\downarrow0$. Using
$H_{(x,t_1)}(\cdot,t)\,d\mu_t\rightharpoonup\delta_x$ as
$t\uparrow t_1$ gives the terminal value because $|A|^{p_*}$ is
continuous at the regular time $t_1$. The nonnegative dissipation
integral converges by monotone convergence. Hence
\begin{align}
 |A(x,t_1)|^{p_*}
 &+\frac4{n-1}\int_{t_0}^{t_1}\int_M
 |D\Phi|^2H_{(x,t_1)}\,d\mu_t\,dt\notag\\
 &=\int_M |A(y,t_0)|^{p_*}H_{(x,t_1)}(y,t_0)
 \,d\mu_{t_0}(y).
 \label{eq:5.pointed}
\end{align}

\begin{proposition}[Uniform anisotropy bound]\label{prop:7.1}
Let $g(t)$, $0\le t<T$, be a smooth closed rotationally symmetric Ricci flow. Then
\begin{equation}
 \|A(\cdot,t)\|_{L^\infty(M,g(t))}
 \le \|A(\cdot,0)\|_{L^\infty(M,g(0))},
 \qquad 0\le t<T.
 \label{eq:7.1}
\end{equation}
The same bound holds on every constant parabolic rescaling of the flow.
If the flow has a smooth pole and $A(\cdot,t_0)\not\equiv0$, then
the bound is strict at every later regular time:
\begin{equation}
 \|A(\cdot,t_1)\|_{L^\infty}<
 \|A(\cdot,t_0)\|_{L^\infty},
 \qquad 0\le t_0<t_1<T.
 \label{eq:7.strict}
\end{equation}
\end{proposition}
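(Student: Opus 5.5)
The plan is to read both bounds off the pointed representation~\eqref{eq:5.pointed}, so that no separate maximum-principle argument is needed.

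\textbf{Non-strict bound.} Fix a regular time $t_1\in(0,T)$ and a point $x\in M$. Apply~\eqref{eq:5.pointed} with $t_0=0$. The dissipation term is nonnegative, so
\[
 |A(x,t_1)|^{p_*}\le\int_M|A(y,0)|^{p_*}H_{(x,t_1)}(y,0)\,d\mu_0(y)
 \le\|A(\cdot,0)\|_{L^\infty}^{p_*}.
\]
The second inequality uses that $H_{(x,t_1)}(\cdot,t)$ is a positive kernel of total mass one for every $t<t_1$; this mass conservation is \eqref{eq:5.2} with $\phi\equiv1$. Taking the supremum over $x$ and then the $p_*$-th root gives~\eqref{eq:7.1}. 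Since $A$ is invariant under parabolic rescaling by~\eqref{eq:2.4}, the same bound holds on every constant rescaling.

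\textbf{Strict bound.} Here I would argue by contradiction. Write $m=\|A(\cdot,t_0)\|_{L^\infty}>0$, and suppose $|A(x,t_1)|=m$ at some point $x$; such a point exists because $M$ is compact. Then equality holds throughout the chain of inequalities above, run from $t_0$ to $t_1$. Two consequences follow:
\begin{enumerate}
 \item $|A(\cdot,t_0)|^{p_*}=m^{p_*}$ holds $H(\cdot,t_0)\,d\mu$-almost everywhere;
 \item $\int_{t_0}^{t_1}\int_M|D\Phi|^2H\,d\mu\,dt=0$.
\end{enumerate}
The conjugate heat kernel is strictly positive at earlier times by the strong maximum principle. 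So the first consequence gives $|A(\cdot,t_0)|\equiv m$ everywhere by continuity. This contradicts Lemma~\ref{lem:4.1}, which says $A=0$ at the smooth pole, while $m>0$.

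The dissipation identity gives an independent route. The second consequence yields $D\Phi=0$ almost everywhere on $M\times(t_0,t_1)$. Then $f^{n-1}\Phi$ is constant in $s$ on each time slice. That constant vanishes at the pole because $f\to0$ and $\Phi$ is bounded, so $A\equiv0$ on $(t_0,t_1)$. By continuity in time, $A(\cdot,t_1)\equiv0$, which contradicts $|A(x,t_1)|=m>0$.

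\textbf{Main obstacle.} The expected difficulty is justifying the equality step. One must know that $H_{(x,t_1)}(\cdot,t_0)>0$ everywhere, which is standard for conjugate heat kernels on closed manifolds. One must also check that~\eqref{eq:5.pointed} holds with equality and without loss at the terminal time; this was already arranged in its derivation. The first rigidity route avoids the weak-derivative issues of $D\Phi$ entirely, since it needs only continuity of $A$ at time $t_0$ and positivity of the kernel, so I would present that route first.
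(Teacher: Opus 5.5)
Your proof is correct and follows the paper's argument. For the non-strict bound you use the pointed identity with $t_0=0$, unit mass of $H_{(x,t_1)}$, and scale invariance of $A$. For the strict bound you combine strict positivity of $H_{(x,t_1)}(\cdot,t_0)$ with the vanishing of $A(\cdot,t_0)$ at the pole; your contradiction at a maximizing point of $|A(\cdot,t_1)|$ is the same step as the paper's ``compactness of $M$ makes the inequality uniform in $x$''. Your second, dissipation-based route is an extra argument the paper does not need here; it is also valid once you restrict to almost every time slice and then use continuity in $t$.
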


\begin{proof}
Formula~\eqref{eq:5.pointed}, with $t_0=0$, gives
$|A(x,t)|^{p_*}\le\|A(\cdot,0)\|_\infty^{p_*}$ for every $x$ and
$t<T$. Taking the $p_*$th root proves~\eqref{eq:7.1}. At either smooth pole,
Lemma~\ref{lem:4.1} gives $A=O(s^4)$, so the pointwise kernel limit
also holds there. Finally, $A$ is unchanged by constant parabolic
rescaling by~\eqref{eq:2.4}.
This recovers the bound of~\cite[Corollary~3.2]{AK} through the
pointed entropy identity.

For the strict statement, put
$M_0=\|A(\cdot,t_0)\|_\infty^{p_*}>0$. At a smooth pole,
$|A(\cdot,t_0)|^{p_*}=0$, so it is strictly below $M_0$ on an open
set. The terminal kernel $H_{(x,t_1)}(\cdot,t_0)$ is positive
everywhere. Its average of $|A(\cdot,t_0)|^{p_*}$ is therefore
strictly less than $M_0$ for every $x$. Formula~\eqref{eq:5.pointed}
and compactness of $M$ make the inequality uniform in $x$, proving
\eqref{eq:7.strict}.
\end{proof}

\subsection{Zero-dissipation rigidity}
\begin{theorem}[Rotational zero-dissipation rigidity]\label{thm:6.2}
On any smooth connected rotational time slice with a smooth pole, the weak
condition $D\Phi=0$ implies $A\equiv0$ and constant sectional curvature.

More generally, let
\[
 g(t)=ds^2+f(s,t)^2g_{\mathbb{S}^{n-1}}
\]
be a smooth connected complete rotational Ricci flow on a time interval, and assume that on every time slice $D\Phi=0$ in the weak orbit-variable sense of Lemma~\ref{lem:3.3}. Then exactly one of the following occurs:
\begin{enumerate}[label=(\roman*)]
 \item $A\equiv0$, and every time slice is a space form;
 \item $f_s\equiv0$, $A\equiv1$, and the flow is a round product
 \[
 (B^1,ds^2)\times\bigl(\mathbb{S}^{n-1},f(t)^2g_{\mathbb{S}^{n-1}}\bigr),
 \]
 where $B^1$ is a complete connected flat one-manifold (hence $\mathbb{R}$ or a circle). In particular, when the orbit space is noncompact this is the standard round product cylinder.
\end{enumerate}
If a smooth pole is present, only the first alternative occurs.
\end{theorem}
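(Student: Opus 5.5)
The plan is to integrate the first-order equation $D\Phi=0$ on each time slice. By Lemma~\ref{lem:3.3}, $\Phi\in W^{1,2}_{\mathrm{loc}}$ in the orbit variable. On the interior of the orbit space, where $f>0$, the weak identity $D\Phi=f^{-(n-1)}\partial_s(f^{n-1}\Phi)$ holds, since $f$ is smooth and positive there. Hence $D\Phi=0$ says that the $W^{1,1}_{\mathrm{loc}}$ function $f^{n-1}\Phi$ has vanishing weak derivative on a connected interval, so it equals a constant $c(t)$. Thus
\[
 \sgn(A)|A|^{p_*/2}=c(t)f^{-(n-1)}
\]
on every regular orbit.

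\textbf{Pole case.} If a smooth pole is present, let $s\to0$ there. Lemma~\ref{lem:4.1} gives $\Phi=O(s^{2p_*})$, and $f^{n-1}=O(s^{n-1})$, so $f^{n-1}\Phi\to0$ and $c=0$. Therefore $A\equiv0$ on the slice. With $A=0$ we get $K_{\mathrm{tan}}=K_{\mathrm{rad}}$ pointwise. Then $1-f_s^2+ff_{ss}=0$; differentiating $K_{\mathrm{tan}}=(1-f_s^2)/f^2$ gives
\[
 \partial_s K_{\mathrm{tan}}=-2f_s(f_{ss}f+1-f_s^2)/f^3=-2f_sA/f^3=0.
\]
So the sectional curvature is constant along the slice, and by rotational symmetry it is constant everywhere, giving a space form. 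This proves the first statement and rules out alternative (ii) when a pole exists.

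\textbf{No pole, $c\ne0$.} Here I would use the flow, not just one slice. Suppose $c(t_1)\ne0$. Then $A$ has fixed sign and $|A|=|c|^{2/p_*}f^{-2(n-2)}$, since $2(n-1)/p_*=2(n-2)$. I would substitute $A=\kappa f^{-2(n-2)}$, with $\kappa$ depending only on $t$, into the evolution \eqref{eq:2.7}, using $f_t$ from \eqref{eq:warp.function}. The left side becomes
\[
 \kappa' f^{-2(n-2)}-2(n-2)\kappa f^{-2n+3}(f_t-\Delta f)+\text{gradient terms}.
\]
Since $D\Phi=0$, the right side of \eqref{eq:3.4} vanishes, so $(\partial_t-\Delta)|A|^{p_*}=0$, i.e. $(\partial_t-\Delta)(|c|^2f^{-2(n-1)})=0$. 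I will compute this using
\[
 (\partial_t-\Delta)f=-(n-2)\frac{1-f_s^2}{f}-(n-1)\frac{f_s^2}{f},
\]
which follows from \eqref{eq:warp.function} and \eqref{eq:2.6}. The expectation is that this yields an identity forcing $\frac{d}{dt}|c|^2$ to equal a nonnegative multiple of a combination such as $f^{-2}(1-f_s^2)$ plus $f_s^2$ terms, which can only be consistent if $f_s\equiv0$. Then $A=1-f_s^2+ff_{ss}=1$ and the metric is $ds^2+f(t)^2g_{\mathbb S^{n-1}}$ with $B^1$ complete and flat. Completeness without poles makes $B^1$ either $\mathbb R$ or a circle.

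\textbf{Alternative route.} An alternative is purely static: $A=\kappa f^{-2(n-2)}$ is a second-order ODE $1-f_s^2+ff_{ss}=\kappa f^{-2(n-2)}$ with first integral. Note that $(f_s^2-1)/f^2$ has derivative $2f_sA/f^3$. This gives
\[
 f_s^2=1+Ef^2-\frac{\kappa}{n-1}f^{-2(n-2)}.
\]
From this I would try to show that no complete pole-free solution exists except the constant one, where $f$ sits at a critical point of the potential. For $\kappa>0$, the function $f$ oscillates between two positive roots unless the roots coincide, which does give periodic "Delaunay-type" slices. So the static argument alone does not suffice, and the time evolution must be used.

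\textbf{Main obstacle.} The hard step is excluding these nonconstant periodic or unduloid-type profiles with $c\ne0$. I would handle it with the parabolic consistency computation above: differentiate $f^{n-1}\Phi=c(t)$ in time using \eqref{eq:2.1} and \eqref{eq:warp.function}. This should give $\partial_s$ of something equal to $c$ times a quantity involving $f_s^2/f^2$, and requiring that the result be constant in $s$ should force $f_s\equiv0$. Exclusivity of (i) and (ii) is immediate, since $A\equiv0$ and $A\equiv1$ cannot both hold.
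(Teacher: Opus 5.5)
Your strategy is the paper's. You integrate $D\Phi=0$ to $f^{n-1}\Phi=c(t)$ and kill $c$ at a pole; your computation $\partial_sK_{\mathrm{tan}}=-2f_sA/f^3$ is a fine substitute for Schur's lemma. In the pole-free case with $c\neq0$ you bring in the evolution equation. The gap is that the decisive step is left as an ``expectation'', and the mechanism you describe does not close as stated.

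If you carry out your computation of $(\partial_t-\Delta)(c^2f^{-2(n-1)})=0$ using \eqref{eq:warp.function} and \eqref{eq:2.6}, you get
\[
 \frac{c'}{c}=\frac{n-1}{f^2}\bigl(2(n-1)f_s^2-(n-2)\bigr).
\]
This has no sign, so no ``nonnegative multiple'' argument is available. What it gives is that the right side is independent of $s$, i.e.\ $f_s^2=\frac{n-2}{2(n-1)}+\mu(t)f^2$. This first-order ODE has nonconstant local solutions (linear, $\sinh$ and $\sin$ profiles). So ``constant in $s$ forces $f_s\equiv0$'' needs one more idea.

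The paper supplies it by returning to the static relation. On any interval where $f_s\neq0$, differentiating gives $f_{ss}=\mu f$. Then $A=1-f_s^2+ff_{ss}=\frac{n}{2(n-1)}$ is constant there, contradicting $A=\kappa f^{-2(n-2)}$ with $\kappa\neq0$ and $f$ strictly monotone.

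You already had an equivalent ingredient: your first integral $f_s^2=1+Ef^2-\frac{\kappa}{n-1}f^{-2(n-2)}$. Subtracting the $s$-independence relation from it gives $\frac{n}{2(n-1)}+(E-\mu)f^2-\frac{\kappa}{n-1}f^{-2(n-2)}=0$ on an interval of $f$-values. This is impossible because $1,f^2,f^{-2(n-2)}$ are independent there and the constant term is nonzero. Neither combination appears in your proposal, so the exclusion of your Delaunay-type profiles is asserted rather than proved.

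A smaller omission concerns time. Fixing one $t_1$ with $c(t_1)\neq0$ only yields $f_s\equiv0$ on the open set of times where $c\neq0$. To get a single alternative for the whole flow, you must also show this set is closed. By continuity of $A$ in $t$, $A\equiv1$ persists to its boundary times, which forces $c\neq0$ there. Hence the set is empty or everything on the connected time interval. Your remark that $A\equiv0$ and $A\equiv1$ cannot hold on the same slice does not rule out a switch between them in time.
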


\begin{proof}
From $D\Phi=0$,
\begin{equation}
 f^{n-1}\Phi=C(t).
 \label{eq:6.1}
\end{equation}
At a smooth pole, $f=0$ and $\Phi$ is bounded by Lemma~\ref{lem:4.1},
so $C=0$ even on a single time slice. Thus $A=0$; the rotational sectional
curvatures agree and Schur's lemma gives constant sectional curvature.

For the flow assertion, at any time when $C(t)=0$ one has $A=0$.
On a time interval where $C(t)\neq0$, the identity
$p_*=(n-1)/(n-2)$ turns~\eqref{eq:6.1} into
\begin{equation}
 A=c(t)f^{-2(n-2)}
 \label{eq:6.2}
\end{equation}
for a nonzero function $c(t)$.

Put $m=n-2$. Differentiating~\eqref{eq:6.2} at fixed orbit
label and using~\eqref{eq:2.5} gives
\[
 \frac{A_t}{A}=\frac{c'}c-2m\frac{f_t}{f}
 =2m\left(-\frac{f_{ss}}f+n\frac{f_s^2}{f^2}\right).
\]
The warping equation~\eqref{eq:warp.function} therefore cancels the
$f_{ss}$ terms and yields
\begin{equation}
 \frac{c'}c
 =\frac{2m}{f^2}\bigl(2(n-1)f_s^2-m\bigr).
 \label{eq:6.3}
\end{equation}
The left side is independent of $s$. Thus, at each time with
$c(t)\neq0$, there is a number $\kappa(t)$ such that
\[
 f_s^2=\frac{m}{2(n-1)}+\kappa(t)f^2.
\]
If $f$ were nonconstant, then on some open orbit interval $f_s\neq0$,
and differentiating in $s$ would give $f_{ss}=\kappa f$. Hence
$A=1-f_s^2+ff_{ss}=n/[2(n-1)]$ would be constant on that interval,
contradicting~\eqref{eq:6.2} with $c\neq0$ and nonconstant $f$.
Therefore $f_s=0$ and $A=1$ whenever $C(t)\neq0$.
Continuity in time prevents a switch between $A=0$ and $A=1$.
In the product case,~\eqref{eq:warp.function} becomes
\[
 f_t=-\frac{n-2}{f},
\]
and the flat complete base is $\mathbb{R}$ or a circle.
\end{proof}

\section{Application: an entropy threshold for Type-I singularities}
\label{sec:application}
The formulas of Sections~\ref{sec:identity} and~\ref{sec:entropy} have
two consequences for Type-I tangents. Their dissipation vanishes in
the limit, so the equality case selects the model; their entropy
then gives a threshold excluding cylindrical tangents. We present
this argument here and defer its compactness and kernel estimates
to the appendices.

\subsection{Singular-time kernels and entropy defect}\label{subsec:density}
Throughout this section and the appendices, assume the closed rotational flow on $\mathbb{S}^n$ satisfies Hamilton's Type-I bound~\cite{Hamilton95Singularities},
\begin{equation}
 |\Rm|_{g(t)}\le\frac{C_I}{T-t}.
 \label{eq:8.1}
\end{equation}
Fix a Type-I singular point $p$ in the sense of
\cite[Definition~1.2]{EMT} and any sequence $\lambda_i\to\infty$.
Set
\begin{equation}
 g_i(\tau)=\lambda_i g\left(T+\frac{\tau}{\lambda_i}\right),\qquad \tau<0.
 \label{eq:8.2}
\end{equation}
By~\cite[Theorem~1.4]{EMT}, every such sequence has a pointed smooth subsequence converging to a nonflat gradient shrinking Ricci soliton in Hamilton's compactness framework~\cite{Hamilton95Compactness}.

For $p\in M$, let $\mathcal{U}_p$ be the family of singular-time Type-I adjoint heat kernels based at $(p,T)$ in the sense of Mantegazza--M\"uller~\cite{MM}. Each $u\in\mathcal{U}_p$ is a positive smooth solution of
\begin{equation}
 u_t=-\Delta u+Ru
 \label{eq:9.1}
\end{equation}
on $M\times[0,T)$, obtained as a locally smooth limit of adjoint heat kernels based at $(p,s_j)$ with $s_j\uparrow T$, and
\begin{equation}
 \int_M u(\cdot,t)\,d\mu_t=1.
 \label{eq:9.2}
\end{equation}
\begin{definition}[Pre-density]\label{def:9.1}
For $u\in\mathcal{U}_p$, specialize the entropy of Definition~\ref{def:3.2} to the $p$-based pre-density
\begin{equation}
 \mathcal{E}_{A,p,u}(t):=\mathcal E_A(t;u)
 \label{eq:9.3}
\end{equation}
and define its singular-time density by
\begin{equation}
 \Theta_A(p;u)=\lim_{t\uparrow T}\mathcal{E}_{A,p,u}(t).
 \label{eq:9.4}
\end{equation}
\end{definition}

By Theorem~\ref{thm:5.2}, normalization~\eqref{eq:9.2} and
Proposition~\ref{prop:7.1}, this limit exists and satisfies
$0\le\Theta_A(p;u)\le\|A(\cdot,0)\|_\infty^{p_*}$.

Integrating~\eqref{eq:5.4} to a regular
time $s<T$ and then letting $s\uparrow T$ gives, for every
$u\in\mathcal U_p$ and $t<T$,
\begin{equation}
 \mathcal{E}_{A,p,u}(t)-\Theta_A(p;u)
 =\frac4{n-1}\int_t^T\int_M |D\Phi|^2u\,d\mu_\sigma\,d\sigma.
 \label{eq:9.defect}
\end{equation}
For a blow-up sequence~\eqref{eq:8.2}, define
\begin{equation}
 u_i(x,\tau)=\lambda_i^{-n/2}
 u\left(x,T+\frac{\tau}{\lambda_i}\right).
 \label{eq:9.6}
\end{equation}
Then $\int u_i\,d\mu_{g_i}=1$, and the scale invariance of $A$ gives
\begin{equation}
 \int_M |A_i|^{p_*}u_i\,d\mu_{g_i(\tau)}
 =\mathcal{E}_{A,p,u}\left(T+\frac{\tau}{\lambda_i}\right).
 \label{eq:9.7}
\end{equation}

\subsection{Density quantization and tangent selection}\label{subsec:selection}
We use three facts proved in the appendices: rotational structure and
local smooth convergence of $A_i$ (Proposition~\ref{prop:8.4});
convergence of the kernels without loss of mass or anisotropic entropy
(Theorem~\ref{thm:9.5}); and zero dissipation of the limiting flow
(Proposition~\ref{prop:10.1}). These facts allow the equality case of
Theorem~\ref{thm:intro.entropy} to determine the singularity model.

\begin{theorem}[Quantization and tangent-type selection]\label{thm:10.2}
Let $g(t)$, $0\le t<T$, be a smooth closed rotationally symmetric Ricci flow on $\mathbb{S}^n$, $n\ge3$, satisfying the Type-I bound~\eqref{eq:8.1}, and let $p$ be a Type-I singular point. The limit $\Theta_A(p;u)$ has the same value for every $u\in\mathcal{U}_p$; denoting it by $\Theta_A(p)$, one has
\begin{equation}
 \Theta_A(p)\in\{0,1\}.
 \label{eq:10.3}
\end{equation}
Every tangent flow at $p$ is a round shrinking sphere when $\Theta_A(p)=0$, and a round shrinking product cylinder when $\Theta_A(p)=1$.
\end{theorem}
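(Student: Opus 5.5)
The proof will be a blow-up argument that feeds the appendix facts into the zero-dissipation rigidity of Theorem~\ref{thm:6.2}. Fix $u\in\mathcal U_p$ and a blow-up sequence $\lambda_i\to\infty$, and form $g_i$ and $u_i$ as in \eqref{eq:8.2} and \eqref{eq:9.6}.

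\textbf{Step 1: extract a limit.} By \cite[Theorem~1.4]{EMT} and Proposition~\ref{prop:8.4}, a subsequence converges to a nonflat shrinking soliton $(M_\infty,g_\infty(\tau))$. This limit is again rotationally symmetric, and $A_i\to A_\infty$ locally smoothly. Theorem~\ref{thm:9.5} gives $u_i\to u_\infty$ with no loss of mass and no loss of anisotropic entropy.

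\textbf{Step 2: identify the limiting entropy.} By the scaling identity \eqref{eq:9.7} and the existence of $\Theta_A(p;u)$, for every $\tau<0$,
\[
\int_{M_\infty}|A_\infty|^{p_*}u_\infty\,d\mu_{g_\infty(\tau)}=\lim_i \mathcal E_{A,p,u}(T+\tau/\lambda_i)=\Theta_A(p;u).
\]
So the limiting entropy is constant in $\tau$.

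\textbf{Step 3: force zero dissipation.} The defect formula \eqref{eq:9.defect}, rescaled, shows that the dissipation of $u_i$ over any window $[\tau_1,\tau_2]$ equals $\mathcal E(T+\tau_1/\lambda_i)-\mathcal E(T+\tau_2/\lambda_i)$, which tends to $0$. Proposition~\ref{prop:10.1} passes this to the limit, presumably through lower semicontinuity of $\int|D\Phi|^2u$ under smooth convergence together with $u_\infty>0$. The conclusion is $D\Phi_\infty=0$ weakly on every slice.

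\textbf{Step 4: apply rigidity and quantize.} Theorem~\ref{thm:6.2} now says the limit is one of two models. Either $A_\infty\equiv0$, in which case the limit is a nonflat space form, hence a compact round shrinking sphere; or $A_\infty\equiv1$ and the limit is a round product, which for a nonflat shrinker arising as a noncompact pointed limit is the round cylinder. A quotient by a circle base cannot occur, since a compact limit forces $M_\infty\cong\mathbb S^n$, which is not $S^1\times\mathbb S^{n-1}$. Since $\int u_\infty=1$, Step~2 gives $\Theta_A(p;u)=0$ in the first case and $\Theta_A(p;u)=1$ in the second. This proves \eqref{eq:10.3} and matches each value to its tangent type.

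\textbf{Step 5: independence of $u$ and of the sequence.} The type of any tangent flow is now determined by $\Theta_A(p;u)$. Hence every subsequence, for every sequence $\lambda_i$, has a tangent of the same type, fixed by $u$. It remains to show the value does not depend on $u$. Take $u,u'\in\mathcal U_p$ and a common sequence $\lambda_i$. After passing to a common subsequence, both give limits along the same pointed convergence of $g_i$, so both see the same limit flow. Whether $A_\infty\equiv0$ or $A_\infty\equiv1$ is a property of the flow alone, not of the kernel. Therefore $\Theta_A(p;u)=\Theta_A(p;u')$.

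\textbf{Main obstacle.} The delicate point is Step~2, that no anisotropic entropy is lost. Mass could escape to spatial infinity in the cylindrical case, where $|A|^{p_*}$ is about $1$, and such an escape would break the equality with $\Theta_A$. This is handled by Theorem~\ref{thm:9.5}, via Gaussian-type bounds for Type-I kernels from \cite{MM} together with the uniform bound $|A|\le\|A(\cdot,0)\|_\infty$ from Proposition~\ref{prop:7.1}. A second point to check is that the weak notion of $D\Phi=0$ survives the limit near the zeros of $A$.
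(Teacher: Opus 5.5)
Your proposal is correct and follows the paper's proof essentially step for step. The shared steps are: vanishing dissipation passed to the limit by Proposition~\ref{prop:10.1}; the two alternatives of Theorem~\ref{thm:6.2}; identification of $\Theta_A(p;u)$ with $\int|A_\infty|^{p_*}u_\infty$ via Theorem~\ref{thm:9.5}; and kernel independence by evaluating two kernels on a common tangent. Your exclusion of a circle base, using $M_\infty\cong\mathbb S^n$ for a compact limit via the convergence maps, is a valid topological substitute for the paper's integration of $\psi_{ss}=\frac12$ around the circle.

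The one step you leave unjustified is $K>0$ in the space-form case; this matters because hyperbolic space also has $A\equiv0$. The paper settles it with the soliton argument of Lemma~\ref{lem:8.1}. You could instead note that a Ricci flow with constant negative curvature degenerates at a finite negative time, so it cannot be ancient.
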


\begin{proof}
Fix $u\in\mathcal U_p$ and a tangent sequence. The compactness
results in Appendices~\ref{app:geometry} and~\ref{app:kernels} give a
complete rotational limit and a positive limiting conjugate heat kernel.
For $\tau_1<\tau_2<0$, the rescaled dissipation identity gives
\[
 \frac4{n-1}\int_{\tau_1}^{\tau_2}\int_M
 |D_i\Phi_i|^2u_i\,d\mu_{g_i(\tau)}\,d\tau
 \le \mathcal E_{A,p,u}\left(T+\frac{\tau_1}{\lambda_i}\right)
       -\Theta_A(p;u)\longrightarrow0.
\]
Positivity of the limiting kernel therefore forces the tangent flow to
have zero dissipation. Proposition~\ref{prop:10.1} justifies this passage
through the zero set of $A_\infty$ and at smooth poles, and gives
$D_\infty\Phi_\infty=0$ on every time slice.

By~\cite{EMT}, the tangent is a complete nonflat shrinking soliton.
The zero-dissipation rigidity of Theorem~\ref{thm:6.2} now gives exactly
two possibilities: a space form with $A_\infty=0$, or a round product
with $A_\infty=1$.

In the space-form case, the soliton argument in the proof of Lemma~\ref{lem:8.1} excludes negative curvature, and nonflatness excludes zero; hence the limit is compact and positively curved. A convergence map from this compact limit embeds it into $\mathbb{S}^n$ with image both open and closed, so it is the round sphere, not a quotient.

In the product case, the base is $\mathbb{R}$ or a circle. The circle is impossible: restrict
\[
 \Ric+\nabla^2\psi=\frac12g
\]
to the circle direction. Since $\Ric_{ss}=0$ for the product, integrating $\psi_{ss}=1/2$ around the circle is impossible. Thus the base is $\mathbb{R}$.

For any $u\in\mathcal{U}_p$ and any tangent flow, Theorem~\ref{thm:9.5} and~\eqref{eq:9.12} give
\[
 \Theta_A(p;u)=\int_{M_\infty}|A_\infty|^{p_*}u_\infty\,d\mu_\infty
 =\begin{cases}
 0,&\text{for a round sphere},\\
 1,&\text{for a round cylinder}.
 \end{cases}
\]
For one fixed $u$, this value fixes the type of every tangent; applying the same formula to any other $u$ proves kernel independence.
\end{proof}

This proves Theorem~\ref{thm:intro.density}.

\subsection{The entropy threshold}
The density calculation turns the exact dissipation formula into a
criterion at an earlier regular time.

\begin{corollary}[An entropy threshold for cylindrical tangents]
\label{cor:10.initial}
Under the assumptions of Theorem~\ref{thm:10.2}, if a Type-I singular
point $p$ has a cylindrical tangent, then for every $u\in\mathcal U_p$
and $t<T$,
\begin{equation}
 \mathcal E_{A,p,u}(t)>1,\qquad
 \|A(\cdot,t)\|_{L^\infty}>1.
 \label{eq:10.threshold}
\end{equation}
Consequently, if $\mathcal E_{A,p,u}(t_0)\le1$ for some
$u\in\mathcal U_p$ and some regular time $t_0<T$, every tangent flow
at $p$ is a round shrinking sphere. If
$\|A(\cdot,t_0)\|_{L^\infty}\le1$, the same conclusion holds at every
Type-I singular point.
\end{corollary}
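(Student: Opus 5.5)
The plan is to combine the quantization of Theorem~\ref{thm:10.2} with the exact defect formula \eqref{eq:9.defect} and the zero-dissipation rigidity of Theorem~\ref{thm:6.2}. Suppose $p$ has a cylindrical tangent, so $\Theta_A(p)=1$. Fix $u\in\mathcal U_p$ and $t<T$. By \eqref{eq:9.defect},
\[
 \mathcal E_{A,p,u}(t)=1+\frac4{n-1}\int_t^T\int_M|D\Phi|^2u\,d\mu_\sigma\,d\sigma\ge1.
\]
The task is to make this inequality strict. If equality held, then $D\Phi=0$ almost everywhere on $M\times[t,T)$, because $u>0$ everywhere. The flow lives on $\mathbb S^n$, so every time slice has two smooth poles. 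The first statement of Theorem~\ref{thm:6.2}, applied on a single slice, then forces $A\equiv0$ on $[t,T)$. This gives $\mathcal E_{A,p,u}\equiv0$ there, so $\Theta_A(p)=0$, which contradicts $\Theta_A(p)=1$. Hence $\mathcal E_{A,p,u}(t)>1$.

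For the $L^\infty$ bound, use the normalization $\int u\,d\mu=1$, which gives
\[
 \mathcal E_{A,p,u}(t)\le\|A(\cdot,t)\|_\infty^{p_*}.
\]
Therefore $\|A(\cdot,t)\|_\infty>1$. A second route gives the same conclusion and serves as a check. Strict monotonicity \eqref{eq:7.strict} from Proposition~\ref{prop:7.1} applies because there is a smooth pole and $A(\cdot,t)\not\equiv0$; the latter holds since otherwise $\Theta_A=0$. So $\|A(\cdot,t)\|_\infty>\|A(\cdot,t')\|_\infty$ for every $t'\in(t,T)$. Combined with the first argument applied at $t'$, this again gives the strict bound.

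The consequences are the contrapositives. If $\mathcal E_{A,p,u}(t_0)\le1$ for some $u$ and some $t_0$, then $p$ cannot have a cylindrical tangent. Then $\Theta_A(p)\ne1$, so by Theorem~\ref{thm:10.2} we have $\Theta_A(p)=0$ and every tangent flow at $p$ is spherical. If instead $\|A(\cdot,t_0)\|_\infty\le1$, the same bound $\mathcal E_{A,p,u}(t_0)\le\|A(\cdot,t_0)\|_\infty^{p_*}\le1$ holds for every Type-I singular point and every kernel in $\mathcal U_p$, so the previous sentence applies at every such point.

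The main obstacle is the equality step, namely passing from a vanishing space-time integral to the pointwise weak condition $D\Phi=0$ on each slice, as Theorem~\ref{thm:6.2} requires. I expect to handle it as follows. By Lemma~\ref{lem:3.3}, $D\Phi\in L^2_{\mathrm{loc}}$, and $u$ is smooth and positive on $M\times[t,s]$ for each $s<T$. A zero integral therefore gives $D\Phi=0$ in $L^2$ for almost every time. Continuity of $A$ in time then shows that $A\equiv0$ on those slices extends to all slices in $[t,T)$.
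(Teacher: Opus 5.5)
Your proposal is correct and follows the paper's proof step for step. You use the defect identity \eqref{eq:9.defect} with $\Theta_A(p)=1$, then the equality case forcing $D\Phi=0$ on almost every slice, so that the pole forces $A\equiv0$ and hence $\Theta_A(p)=0$, a contradiction. You finish with the unit-mass bound $\mathcal E_{A,p,u}(t)\le\|A(\cdot,t)\|_\infty^{p_*}$ and contraposition through Theorem~\ref{thm:10.2}. Your second route to $\|A(\cdot,t)\|_\infty>1$ via the strict bound \eqref{eq:7.strict} is also valid, though the paper does not need it.
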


\begin{proof}
For a cylindrical tangent, Theorem~\ref{thm:10.2} gives
$\Theta_A(p;u)=1$ for every $u\in\mathcal U_p$. Hence
\begin{equation}
 \mathcal E_{A,p,u}(t)-1
 =\frac4{n-1}\int_t^T\int_M |D\Phi|^2u\,d\mu_\sigma\,d\sigma
 \ge0.
 \label{eq:threshold.defect}
\end{equation}
If equality held, positivity of $u$ would give $D\Phi=0$ on almost
every smooth time slice in $(t,T)$. On such a slice,
$f^{n-1}\Phi$ is constant along the orbit space, and a smooth pole
forces this constant to vanish. Thus $A\equiv0$ and the entropy is
zero on that slice. Monotonicity and nonnegativity then imply
$\Theta_A(p;u)=0$, a contradiction. This proves the strict entropy
inequality. Since $u$ has unit mass,
\[
 \mathcal E_{A,p,u}(t)\le\|A(\cdot,t)\|_\infty^{p_*},
\]
which proves the second inequality in~\eqref{eq:10.threshold}.
The spherical conclusions follow by contraposition and the two
alternatives of Theorem~\ref{thm:10.2}. This also proves
Corollary~\ref{cor:intro.threshold}.
\end{proof}

The constant $1$ is the anisotropy entropy of the round product
cylinder. Formula~\eqref{eq:threshold.defect} identifies the excess
above this value with the entire remaining weighted dissipation.
The weighted hypothesis concerns a singular-time kernel based at the
chosen point; it does not require $|A|\le1$ everywhere. The criterion
is sufficient for spherical tangents: an entropy value above $1$
alone does not assert that the tangent is cylindrical.

\subsection{Full blow-up convergence}
Once the density selects a tangent type, subsequential compactness
gives the following consequence.
\begin{corollary}[Subsequence-free blow-up convergence]\label{cor:10.3}
Under the assumptions of Theorem~\ref{thm:10.2}, the full family of rescalings
\[
 g_\lambda(\tau)=\lambda g\left(T+\frac{\tau}{\lambda}\right),\qquad \lambda\to\infty,
\]
converges, without passing to subsequences and in the pointed smooth Cheeger--Gromov topology on compact spacetime subsets of $(-\infty,0)$, to the normalized round shrinking sphere if $\Theta_A(p)=0$, and to the normalized round shrinking cylinder if $\Theta_A(p)=1$.
\end{corollary}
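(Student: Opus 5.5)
The plan is to upgrade the subsequential convergence supplied by \cite[Theorem~1.4]{EMT} to convergence of the full family. The upgrade uses the fact that Theorem~\ref{thm:10.2} pins the tangent model down uniquely, up to the natural normalization. I would argue by contradiction through the standard subsequence principle. If $g_\lambda$ did not converge to the normalized model $G$, there would be a compact spacetime set $K\subset M_\infty\times(-\infty,0)$, some $\varepsilon>0$, and a sequence $\lambda_i\to\infty$ with the following property: no choice of pointed diffeomorphisms brings $g_{\lambda_i}$ within $\varepsilon$ of $G$ in $C^k(K)$. Here $G$ is the round shrinking sphere if $\Theta_A(p)=0$ and the round shrinking cylinder if $\Theta_A(p)=1$. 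Applying \cite[Theorem~1.4]{EMT} together with the compactness results of Appendix~\ref{app:geometry}, I would pass to a further subsequence converging in the pointed smooth Cheeger--Gromov sense to a nonflat shrinking soliton flow $g_\infty$.

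Theorem~\ref{thm:10.2} applies to this subsequence, since it concerns every tangent flow at $p$. Because $\Theta_A(p)$ is a fixed number independent of the sequence, it forces $g_\infty$ to be a round shrinking sphere when $\Theta_A(p)=0$ and a round shrinking product cylinder $\mathbb R\times\mathbb S^{n-1}$ when $\Theta_A(p)=1$. The remaining task is to check that $g_\infty$ is exactly the normalized model, meaning that its scale is fixed. For a tangent flow arising from rescalings about the singular time $T$, the limit is a shrinker that becomes singular exactly at $\tau=0$. In the cylinder case, the product equation $f_t=-(n-2)/f$ from Theorem~\ref{thm:6.2} integrates to $f(\tau)^2=-2(n-2)\tau$, with no free constant once extinction occurs at $\tau=0$. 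For the sphere, the analogous ODE for the radius gives the normalized shrinking sphere. The limit therefore coincides with $G$, which contradicts the choice of the sequence $\lambda_i$.

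The main obstacle is this normalization: I must show that the limit becomes singular exactly at $\tau=0$ and not later. I would handle it with the Type-I bound~\eqref{eq:8.1}. Under rescaling it gives $|\Rm_{g_\lambda}|\le C_I/|\tau|$, so the limit curvature cannot blow up before $\tau=0$. Conversely, $p$ is a Type-I singular point, so \cite[Definition~1.2]{EMT} provides a lower bound $|\Rm|(x_\lambda,\tau)\ge c/|\tau|$ near the base point. This prevents the extinction time from lying beyond $0$. A second concern is the basepoint for the sphere. The limit is compact, so by the argument in the proof of Theorem~\ref{thm:10.2} the rescaled manifolds are eventually diffeomorphic to it, and pointed convergence is then automatic up to isometry.
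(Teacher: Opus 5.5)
Your skeleton is the paper's own argument: subsequential compactness from \cite{EMT}, Theorem~\ref{thm:10.2} fixing the type of every tangent, homogeneity of the model removing the basepoint ambiguity, and the subsequence principle. The paper just names the normalized models $(-2(n-1)\tau)g_{\mathbb S^n}$ and $dz^2+(-2(n-2)\tau)g_{\mathbb S^{n-1}}$. You are right that the scale must be pinned down. However, your argument that extinction cannot occur later than $\tau=0$ fails as written.

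\cite[Definition~1.2]{EMT} only supplies one essential blow-up sequence $(p_j,t_j)$, with $p_j\to p$ in the \emph{unscaled} manifold and $|\Rm|(p_j,t_j)\ge c/(T-t_j)$. Nothing ties the times $t_j$ to $T+\tau/\lambda_i$ for $\tau$ in a compact subset of $(-\infty,0)$. Nothing keeps the points $p_j$ at bounded $g_{\lambda_i}$-distance from $p$. So for an arbitrary sequence $\lambda_i$ the definition gives no lower bound $c/|\tau|$ near the base point. It therefore cannot exclude, for instance, a limit cylinder with $f^2=c-2(n-2)\tau$, $c>0$, which becomes singular only at $\tau=c/(2(n-2))>0$. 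A uniform lower bound of the kind you invoke is a consequence of the blow-up theorem, not of the definition.

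The fix is to use the full conclusion of \cite[Theorem~1.4]{EMT}. The limit is a shrinker in canonical form, $\Ric+\nabla^2\psi=\frac{1}{2|\tau|}g_\infty(\tau)$, with singular time $0$; this is the form the paper itself uses in Lemma~\ref{lem:8.1} and Theorem~\ref{thm:10.2}. In the sphere case, $\nabla^2\psi=\bigl(\frac1{2|\tau|}-(n-1)K\bigr)g_\infty$. Integrating the trace over the closed slice gives $K=\frac{1}{2(n-1)|\tau|}$. In the cylinder case, restrict the equation to a totally geodesic fiber $\{z\}\times\mathbb S^{n-1}$ of radius $f$. There the intrinsic Hessian of $\psi$ equals $\bigl(\frac1{2|\tau|}-\frac{n-2}{f^2}\bigr)g$, and integrating its trace over the fiber gives $f^2=2(n-2)|\tau|$. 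With this normalization in place, the rest of your argument goes through and coincides with the paper's.
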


\begin{proof}
Every rescaling sequence has a convergent subsequence by~\cite{EMT}. Theorem~\ref{thm:10.2} fixes its type, and the normalized models are
\[
 (-2(n-1)\tau)g_{\mathbb{S}^n}
 \quad\text{and}\quad
 dz^2+(-2(n-2)\tau)g_{\mathbb{S}^{n-1}},
\]
with unit sectional curvature on each sphere. Both are homogeneous, so each type has one pointed normalized model up to spacetime isometry. Failure of full convergence would give a sequence staying away from that model yet admitting a subsequence converging to it, a contradiction.
\end{proof}

\appendix
\section{Rotational compactness of Type-I tangent flows}\label{app:geometry}
\label{subsec:tangent}
We work under the assumptions of Section~\ref{sec:application}, with
the rescaled flows~\eqref{eq:8.2}. The purpose of this appendix is to
prove that their pointed limits inherit the global rotational
structure, with local smooth convergence of the anisotropy $A_i$.
The uniform bound needed below is Proposition~\ref{prop:7.1}.

\subsection{Orbit-radius control}
The rotational orbit through $p$ has radius
\begin{equation}
 F_i(p,\tau)=\sqrt{\lambda_i}\,f\left(p,T+\frac{\tau}{\lambda_i}\right)
 \label{eq:8.3}
\end{equation}
(zero when $p$ is a pole).

\begin{lemma}[Orbit-radius nonescape]\label{lem:8.1}
For every compact time interval $I\Subset(-\infty,0)$, discard finitely many indices so that all $g_i$ are defined on $I$. The rescaled orbit radius at the base point is then uniformly bounded:
\begin{equation}
 \sup_i\sup_{\tau\in I}F_i(p,\tau)<\infty.
 \label{eq:8.4}
\end{equation}
If $p$ is a pole, the assertion is trivial since $F_i(p,\tau)=0$.
\end{lemma}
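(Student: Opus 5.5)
The plan is to argue by contradiction. Suppose that, after passing to a subsequence, there are times $\tau_i\in I$ with $F_i(p,\tau_i)\to\infty$. Since $I$ is compact, we may also assume $\tau_i\to\bar\tau\in I$. We then combine three ingredients: the Type-I bound~\eqref{eq:8.1}, the uniform anisotropy bound of Proposition~\ref{prop:7.1}, and the fact from~\cite[Theorem~1.4]{EMT} that the pointed limit is a \emph{nonflat} gradient shrinking soliton. Because $p$ is a Type-I singular point, the rescaled curvature near $p$ does not decay. Concretely, after a subsequence the pointed flows $(M,g_i(\tau),p)$ converge smoothly on compact sets to a nonflat shrinker $(M_\infty,g_\infty(\tau),p_\infty)$, and in particular $|\Rm_{g_\infty}|(p_\infty,\bar\tau)>0$ or at least nonzero somewhere in a fixed ball.

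The first step uses the explicit curvature formulas. At the base point,
\[
K_{\mathrm{tan}}=\frac{1-f_s^2}{f^2},\qquad K_{\mathrm{rad}}=K_{\mathrm{tan}}-\frac{A}{f^2}.
\]
By~\eqref{eq:2.4} the rescaled quantities satisfy $|A_i|\le\|A(\cdot,0)\|_\infty$. The Type-I bound gives $|K_{\mathrm{tan}}|\le C_I/|\tau|$ for the rescaled flow, hence $|f_{i,s}|^2\le 1+C F_i^2/|\tau|$. The aim is to show that $F_i\to\infty$ forces all curvatures near $p$ to tend to zero. For $K_{\mathrm{rad}}$ this is immediate: $|A_i|/F_i^2\to0$ at $p$, so $K_{\mathrm{rad}}-K_{\mathrm{tan}}\to0$ there. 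For $K_{\mathrm{tan}}$ itself, note that $1/F_i^2\to0$, so
\[
K_{\mathrm{tan}}\approx -f_s^2/F_i^2=-|\nabla \log F_i|^2.
\]
It therefore suffices to control $|\nabla\log F_i|$ at $p$. I would obtain this through the Type-I gradient estimate: Shi-type derivative bounds give $|\nabla\Rm|\le C|\tau|^{-3/2}$. Combined with the uniform curvature bound, these let us propagate the relation $f_{ss}/f=K_{\mathrm{tan}}-A/f^2$ along a unit-length segment. On a ball of fixed size around $p$, $\log F_i$ then varies by a bounded amount, so $F_i\to\infty$ holds uniformly on that ball. In the limit the orbits through the ball have infinite radius, so the limit splits locally: there $K_{\mathrm{tan}}=-K$ with $K:=|\nabla\log F|^2$, and $K_{\mathrm{rad}}=K_{\mathrm{tan}}$. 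This makes the limit locally of constant curvature $-K\le0$.

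The second step excludes this limit using the soliton structure. A gradient shrinker $\Ric+\nabla^2\psi=\tfrac12 g$ that is locally of constant sectional curvature $\kappa$ is, by analyticity of shrinkers, globally a space form. Taking the trace of the soliton equation, the scalar curvature of a shrinker satisfies $R\ge0$, with $R>0$ unless the shrinker is flat (Chen's estimate and the strong maximum principle). A constant-curvature shrinker with $\kappa\le0$ therefore has $R\le0$, hence is flat. This contradicts nonflatness. This is precisely the ``soliton argument excluding negative curvature'' that Theorem~\ref{thm:10.2} later invokes.

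I expect the main obstacle to be the first step. Showing that unbounded orbit radius at $p$ forces the limit to be locally of constant curvature needs uniform control of $f_s$, equivalently of the hyperbolic-type term $f_s^2/f^2$, near $p$. Note that $A$ only controls $f_{ss}/f$ up to an $O(1/F^2)$ error, so the delicate point is passing from the pointwise statement at $p$ to a neighbourhood. I would first try the Harnack/distance-distortion argument: under the Type-I bound, distances between $p$ and nearby points distort by a factor of at most $e^{C}$ over $I$. Together with $|\nabla\log F|^2=-K_{\mathrm{tan}}+1/F^2\le C/|\tau|+o(1)$, this gives a uniform Lipschitz bound on $\log F_i$. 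That yields a neighbourhood of fixed radius on which $F_i\to\infty$ uniformly, which is what the local splitting in the first step requires.
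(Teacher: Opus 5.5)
Your proposal is correct and follows essentially the paper's argument. You argue by contradiction, propagate $F_i\to\infty$ to bounded radial intervals through a Lipschitz bound coming from $f_s^2=1-K_{\mathrm{tan}}f^2$ (the paper uses $\arsinh(\sqrt{C}F_i)$, which avoids your bootstrap on $F_i\ge1$), and use $|A_i|/F_i^2\to0$ with $K_{\mathrm{tan}}\le 1/F_i^2$ to get an isotropic, nonpositively curved limit. That limit has constant curvature by Schur's lemma ($n\ge3$), which you use implicitly and should cite, and it is excluded by the shrinker equation and nonflatness. The remaining differences are minor: you globalize by analyticity where the paper simply runs the propagation on every pointed ball, and you rule out $\kappa\le0$ with $R\ge0$ for shrinkers, whereas the paper observes that $\nabla^2\psi=c\,g_0$ with $c>0$ forces $R(X,Y)\nabla\psi=0$ and hence $\nabla\psi=0$ when $\kappa<0$.
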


\begin{proof}
If~\eqref{eq:8.4} failed, take $\tau_i\to\tau_0\in I$ with $F_i(p,\tau_i)\to\infty$. Type-I metric distortion gives
\[
 C^{-1}g_i(\tau_0)\le g_i(\tau_i)\le Cg_i(\tau_0).
\]
Thus $F_i(p,\tau_0)\to\infty$.

Use signed radial distance $r$ from the base orbit. The tangential curvature formula gives
\[
 |F_i'|\le\sqrt{1+CF_i^2}.
\]
Thus $\arsinh(\sqrt{C}F_i)$ is uniformly Lipschitz, so $F_i(0)\to\infty$ implies
\begin{equation}
 F_i(r)\to\infty
 \label{eq:8.5}
\end{equation}
uniformly on compact radial intervals. The distance $d_i$ from the
base orbit to either pole satisfies
\[
 d_i\ge\frac{\arsinh(\sqrt C F_i(0))}{\sqrt C}\longrightarrow\infty,
\]
so these intervals exist for large $i$.

Every fixed pointed ball projects into a bounded radial interval, so
\eqref{eq:8.5} holds uniformly there. The uniform bound for $A_i$
from Proposition~\ref{prop:7.1} now gives
\[
 K_{\mathrm{tan},i}-K_{\mathrm{rad},i}=\frac{A_i}{F_i^2}\longrightarrow0,
 \qquad K_{\mathrm{tan},i}=\frac{1-(F_i')^2}{F_i^2}
 \le\frac1{F_i^2}\longrightarrow0
\]
on these balls. Hence the smooth pointed limit is pointwise isotropic
with nonpositive sectional curvature. By Schur's lemma ($n\ge3$), its
sectional curvature is a constant $K\le0$ on the connected time slice.

If $K=0$, the tangent is flat, contrary to its nonflatness~\cite{EMT}.
If $K<0$, the time slice $g_0=g_\infty(\tau_0)$ has curvature $K$, and its shrinking-soliton equation gives
\[
 \nabla^2\psi=cg_0,
 \qquad c=\frac{1}{2|\tau_0|}-(n-1)K>0.
\]
With $V=\nabla\psi$, commuting $\nabla_XV=cX$ gives $R(X,Y)V=0$. But constant negative curvature then yields
\[
 0=\Ric_{g_0}(V,V)=(n-1)K|V|_{g_0}^2.
\]
Thus $V=0$ and $\nabla^2\psi=0$, contradicting $c>0$. Hence orbit-radius escape is impossible.
\end{proof}

\subsection{Regular and pole charts}
\begin{lemma}[Radial compactness away from poles]\label{lem:8.2}
Let $I\Subset(-\infty,0)$ and let $Q_i$ be a sequence of rotational orbits in the rescaled flows such that their radii satisfy
\[
 0<c\le F_i(Q_i,\tau)\le C
\]
for $\tau\in I$. Then, after passing to a subsequence, there are signed-distance coordinates $r$ from $Q_i$ on a uniform spacetime neighborhood for which
\[
 g_i(\tau)=dr^2+F_i(r,\tau)^2g_{\mathbb{S}^{n-1}},
\]
and the functions $F_i$ converge in $C^\infty$ on compact subcylinders to a positive smooth limit $F_\infty$.
\end{lemma}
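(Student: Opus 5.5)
We need uniform bounds on $F_i$ and all its radial derivatives on a uniform neighborhood of $Q_i$. Then Arzelà--Ascoli gives a smooth subsequential limit. The plan has three steps, taken in order.

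\textbf{Step 1: uniform chart size.} Take $r$ to be the signed $g_i(\tau)$-distance from the orbit $Q_i$ along the orbit space, so that $g_i(\tau)=dr^2+F_i(r,\tau)^2g_{\mathbb S^{n-1}}$ wherever $F_i>0$. The Type-I bound~\eqref{eq:8.1}, rescaled, gives $|\Rm_{g_i}|\le C_I/|\tau|\le C$ on $I$. Since $F_i'$ is bounded by the tangential curvature formula, the same differential inequality as in Lemma~\ref{lem:8.1} applies:
\[
 |F_i'|\le\sqrt{1+CF_i^2}.
\]
It gives $F_i\ge c/2$ on $|r|\le\delta$ for a uniform $\delta>0$. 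Indeed $(F_i')^2\le1+CF_i^2$, and the radial curvature bound $|F_i''|\le CF_i$ controls how fast $F_i$ can decrease. So the chart avoids the poles uniformly.

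\textbf{Step 2: time dependence of the chart.} The coordinate $r$ depends on $\tau$, but the orbit label moves only mildly. By~\eqref{eq:2.1}, $\partial_\tau$ of arclength is governed by $f_{ss}/f=-K_{\mathrm{rad}}$, which is bounded. Hence the map from the fixed orbit label to $r$ is uniformly bi-Lipschitz over $I$, and we obtain a uniform spacetime neighborhood $\{|r|\le\delta'\}\times I$.

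\textbf{Step 3: higher derivative bounds.} Shi's estimates, applied on a slightly larger compact interval $I'\Supset I$ inside $(-\infty,0)$ where the curvature is still bounded, give $|\nabla^k\Rm_{g_i}|\le C_k$ on $I$. In the rotational frame, $F_i''=-K_{\mathrm{rad}}F_i$ and $(F_i')^2=1-K_{\mathrm{tan}}F_i^2$. Differentiating these radially, together with the bound $c/2\le F_i\le C'$ from Steps 1--2, inductively bounds all $\partial_r^kF_i$. Time derivatives then follow from the warping equation~\eqref{eq:warp.function}, which expresses $\partial_\tau F_i$ through $F_i''$ and $(1-(F_i')^2)/F_i$, plus the commutator~\eqref{eq:2.1}. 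Arzelà--Ascoli with a diagonal subsequence then gives $C^\infty$ convergence to a smooth $F_\infty\ge c/2>0$.

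The main obstacle is Step 3. Shi's estimates bound covariant derivatives, and converting these into bounds on $\partial_r^kF_i$ in the moving gauge requires care. I would handle it by expressing everything through the orbit-invariant scalars $K_{\mathrm{rad}}$ and $K_{\mathrm{tan}}$. Their radial derivatives are components of $\nabla^k\Rm$ in the rotational orthonormal frame, and that frame has bounded connection coefficients because $F_i'/F_i$ is bounded once $F_i\ge c/2$.
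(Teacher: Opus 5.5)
Your proposal is correct and follows essentially the paper's route. The steps match: the Lipschitz bound $|F_i'|\le\sqrt{1+\Lambda F_i^2}$ yields a uniform chart away from the poles; Shi's estimates on a slightly larger interval bound $\partial_r^jH_i$, hence every $\partial_r^kF_i$ through $F_i''=-H_iF_i$; the time derivatives come from the warping equation corrected by the arclength velocity $v_i=(n-1)\int_0^r (F_i)_{\rho\rho}/F_i\,d\rho$ obtained from~\eqref{eq:2.1}; Arzel\`a--Ascoli concludes. The only difference is that the paper works in a radially parallel orthonormal frame, where $\partial_r^jH_i$ is exactly a component of $\nabla^j\Rm$ with no connection terms, so the conversion issue you flag as the main obstacle does not arise.
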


\begin{proof}
On a slightly larger time interval, Type-I control and Shi's
estimates~\cite{Shi} give
\[
 |\nabla^j\Rm(g_i)|\le C_j\qquad(\tau\in I).
\]
Put $H_i=K_{\mathrm{rad},i}$, and bound sectional curvatures by
$\Lambda$. At $r=0$,
\[
 c\le F_i\le C,\qquad
 |(F_i)_r|^2=1-K_{\mathrm{tan},i}F_i^2\le1+\Lambda C^2,
 \qquad (F_i)_{rr}=-H_iF_i.
\]
These bounds put $Q_i$ a uniform positive distance from either pole
and give a uniform $\delta>0$ on which
$c/2\le F_i\le C'$ and $|(F_i)_r|\le C'$.
In a radially parallel frame, Shi's estimates also bound $\partial_r^jH_i$. Differentiating $(F_i)_{rr}=-H_iF_i$ gives
\[
 \partial_r^{m+2}F_i
 =-\sum_{j=0}^{m}\binom{m}{j}
 (\partial_r^jH_i)(\partial_r^{m-j}F_i).
\]
Thus all radial derivatives are uniformly bounded.

Let $v_i=\partial_\tau r_i|_x$ be the velocity of signed arclength from the fixed-label orbit $Q_i$. Equations~\eqref{eq:2.1} and~\eqref{eq:warp.function} give
\begin{align*}
 v_i(r,\tau)&=(n-1)\int_0^r\frac{(F_i)_{\rho\rho}}{F_i}\,d\rho,\\
 \left.\partial_\tau F_i\right|_r
 &=(F_i)_{rr}-(n-2)\frac{1-(F_i)_r^2}{F_i}-v_i(F_i)_r.
\end{align*}
Differentiating both equations in $r$ and $\tau$ and inducting on the time order gives
\[
 \sup_{|r|\le\delta,\,\tau\in I}
 |\partial_r^m\partial_\tau^bF_i|\le C_{m,b},
 \qquad m,b\ge0.
\]
Arzel\`a--Ascoli yields smooth subsequential convergence with $F_\infty>0$.
\end{proof}

\begin{lemma}[Pole recentering compactness]\label{lem:8.3}
Let $I\Subset(-\infty,0)$ and let $Q_i$ be a sequence of rotational orbits with $F_i(Q_i,\tau_0)\to0$ at some $\tau_0\in I$. Then after passing to a subsequence each $Q_i$ lies at distance tending to zero from one fixed pole. In radial distance $\rho$ from that pole,
\[
 g_i(\tau)=d\rho^2+\widehat F_i(\rho,\tau)^2g_{\mathbb{S}^{n-1}},
\]
and $\widehat F_i$ subconverges smoothly on compact pole-centered spacetime neighborhoods to a smooth warping function satisfying
\[
 \widehat F_\infty(0,\tau)=0,\qquad
 (\widehat F_\infty)_\rho(0,\tau)=1.
\]
\end{lemma}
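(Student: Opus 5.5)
We need to prove Lemma \ref{lem:8.3}, the pole recentering compactness. Throughout we use the rescaled Type-I bounds and Shi's estimates on a slightly larger interval than $I$, as in Lemma~\ref{lem:8.2}.

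\textbf{Step 1: the orbits $Q_i$ approach a pole.} On $\mathbb S^n$ the orbit space is an interval with two poles. Along a radial geodesic, $\widehat F_{\rho\rho}=-K_{\mathrm{rad}}\widehat F$ with $|K_{\mathrm{rad}}|\le\Lambda$. Near a regular orbit of small radius $\varepsilon_i=F_i(Q_i,\tau_0)$ we have $|F_r|\le\sqrt{1+\Lambda\varepsilon_i^2}$. Moreover the injectivity radius of $g_i(\tau_0)$ at points near $Q_i$ is bounded below by Type-I noncollapsing (Perelman's $\kappa$-noncollapsing together with the curvature bound).

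Suppose the distance from $Q_i$ to both poles stayed bounded below by some $d>0$. Then a ball of radius $\min(d,1)/2$ around a point of $Q_i$ contains the orbit, a round $(n-1)$-sphere of radius $\varepsilon_i\to0$. Since $F_r$ is bounded by the comparison above, $F_i\le\varepsilon_i+C\rho$ on this ball. The warped-product volume of a $\delta$-ball is then $O(\delta(\varepsilon_i+\delta)^{n-1})$. For a fixed small $\delta$ with $\varepsilon_i\ll\delta$ this contradicts the lower bound $\kappa\delta^n$ once we choose $\delta\ll\kappa$ suitably.

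I would make this rigorous as follows. First bound $F_r$ near $Q_i$ via the ODE and the fact that $|F_r|\le1$ wherever $K_{\mathrm{tan}}\ge0$. If that sign fails, use instead $F_r^2=1-K_{\mathrm{tan}}F^2\le1+\Lambda F^2$. This shows that the distance to the nearest pole is $O(\varepsilon_i)$ plus a collapse contradiction. Alternatively, one can argue directly that $F_r(Q_i)\to\pm1$ and integrate $F_{\rho\rho}=-HF$ inward, reaching $F=0$ within distance $\approx\varepsilon_i$. Passing to a subsequence, we may fix one of the two poles.

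\textbf{Step 2: uniform pole charts.} In geodesic normal coordinates at the pole for $g_i(\tau)$, the bounds $|\nabla^j\Rm|\le C_j$ together with the injectivity radius lower bound give uniform $C^k$ control of the metric on a fixed ball (Hamilton compactness). By rotational symmetry, $\widehat F_i(\rho,\tau)^2=\rho^2 g_{22}(\rho e_1,\tau)$ with $g_{22}$ smooth and even in $\rho$, exactly as in the expansion~\eqref{eq:4.1}. Hence $\widehat F_i/\rho$ has uniformly bounded derivatives of all orders in $\rho$.

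For time derivatives, the labels are fixed while the pole stays at $\rho=0$. I would repeat the argument of Lemma~\ref{lem:8.2}, now with $v_i(\rho,\tau)=(n-1)\int_0^\rho \widehat F_{\sigma\sigma}/\widehat F\,d\sigma$. This integral is finite because $\widehat F_{\sigma\sigma}/\widehat F=-K_{\mathrm{rad}}$ is bounded. Induction on time order then gives bounds on $\partial_\rho^m\partial_\tau^b\widehat F_i$.

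\textbf{Step 3: passing to the limit.} Arzel\`a--Ascoli gives smooth subconvergence. The conditions $\widehat F_i(0,\tau)=0$ and $(\widehat F_i)_\rho(0,\tau)=1$ hold for every $i$ by smoothness at the pole, so they pass to the limit.

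\textbf{Main obstacle.} I expect the main obstacle to be Step 1, which excludes a vanishing orbit radius at a regular orbit far from the poles. The approach I will try first is the $\kappa$-noncollapsing volume contradiction. If that proves awkward, I will fall back on the ODE comparison of $\widehat F_i$ against $\sin$ and $\sinh$ profiles.
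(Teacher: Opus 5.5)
Your overall architecture (locate a pole, build a pole-centered chart with uniform bounds, pass the time-independent pole conditions to the limit) is the paper's. Your time-derivative step also uses the paper's nonsingular velocity $v_i=-(n-1)\int_0^\rho H_i\,d\sigma$.

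\textbf{Step~1 fails in the form you plan to try first.} The volume comparison gives no contradiction. With $F_i\le\varepsilon_i+C\rho$, the $\delta$-ball has volume at most $C'\delta(\varepsilon_i+C\delta)^{n-1}\approx C''\delta^n$. This scales exactly like the noncollapsing lower bound $\kappa\delta^n$, so choosing $\delta$ small changes nothing. In fact this profile is what a genuine pole looks like, so such a bound cannot tell whether a pole is nearby. Your ``rigorous'' variant has the same defect: it uses only $F_r^2\le1+\Lambda F^2$, and an \emph{upper} bound on the slope can never force $F$ to vanish nearby.

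The missing input is the \emph{lower} bound from the same identity, $|1-F_r^2|=|K_{\mathrm{tan}}|F^2\le\Lambda F^2$. Wherever $F_i\le\varepsilon_i$ this gives $|F_r|\ge1/2$ for large $i$. Moving in the decreasing direction, $F_r$ cannot change sign, so $F$ reaches $0$, i.e.\ a pole, within distance $2\varepsilon_i$. This is your ``Alternatively, $F_r(Q_i)\to\pm1$'' sentence, and it is exactly the paper's argument. It should be the proof, not a fallback; the step you call the main obstacle is a two-line consequence of \eqref{eq:8.8}. Combine it with the Type-I distortion $\sup_{\tau\in I}F_i(Q_i,\tau)\le C\,F_i(Q_i,\tau_0)$. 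Then one pole works with $d_{g_i(\tau)}(Q_i,P_i)\le2\varepsilon_i$ uniformly on $I$, which is how the lemma is used in Proposition~\ref{prop:8.4}.

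\textbf{Step~2 is valid but heavier than necessary.} You get the chart size from an injectivity-radius bound via Perelman's noncollapsing, and the spatial bounds from Hamilton's normal-coordinate estimates. The paper stays within the ODE. The Jacobi integral equation $\widehat F_i=\rho-\int_0^\rho(\rho-\sigma)H_i\widehat F_i\,d\sigma$ with $|H_i|\le\Lambda$ gives $\widehat F_i\ge\rho/2$ on $[0,\delta]$ for some $\delta=\delta(\Lambda)$. This forces the opposite pole to lie at distance at least $\delta$. Then $\partial_\rho^{m+2}\widehat F_i=-\partial_\rho^m(H_i\widehat F_i)$ with Shi's bounds controls every radial derivative up to the pole.

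If you literally repeat the time-derivative induction of Lemma~\ref{lem:8.2}, rewrite $(n-2)(1-F_\rho^2)/F$ as $(n-2)K_{\mathrm{tan}}F$; otherwise the $1/F$ factor is uncontrolled at $\rho=0$. The paper avoids this by bounding mixed derivatives of $H_i$ in the moving pole gauge and differentiating the Jacobi equation, whose initial data at the pole are time-independent.
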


\begin{proof}
On a slightly larger time interval, bound sectional curvatures by $K$.
Type-I metric distortion on $I$ gives
$\varepsilon_i:=\sup_{\tau\in I}F_i(Q_i,\tau)
\le C(I,C_I)F_i(Q_i,\tau_0)\to0$, while
\begin{equation}
 |1-(F_i')^2|\le K F_i^2.
 \label{eq:8.8}
\end{equation}
For large $i$, $|F_i'|\ge1/2$ where $F_i\le\varepsilon_i$; its sign is fixed over $I$, and the decreasing direction reaches a pole $P_i$ (take $P_i=Q_i$ if already a pole), with
\[
 \sup_{\tau\in I}d_{g_i(\tau)}(Q_i,P_i)\le2\varepsilon_i\longrightarrow0.
\]
After a subsequence, choose the same pole for all $i$.

Set $\rho=d_{g_i(\tau)}(P_i,\cdot)$ and $H_i=K_{\mathrm{rad},i}$.
Let $L_i(\tau)$ be the distance to the opposite pole. On
$0\le\rho<L_i(\tau)$, the Jacobi equation and the pole conditions
$\widehat F_i(0)=0$, $(\widehat F_i)_\rho(0)=1$ give
\[
 \begin{aligned}
 \widehat F_i(\rho,\tau)
 &=\rho-\int_0^\rho(\rho-\sigma)H_i(\sigma,\tau)
   \widehat F_i(\sigma,\tau)\,d\sigma,\\
 |\widehat F_i(\rho,\tau)-\rho|&\le C_K\rho^3,
 \qquad |(\widehat F_i)_\rho(\rho,\tau)-1|\le C_K\rho^2
 \quad(\rho\le\delta).
 \end{aligned}
\]
Here the estimates follow from $|H_i|\le K$ and the integral equation
on $[0,\min\{L_i(\tau),\delta\})$. Choose $\delta>0$ depending only
on $K$ so that $C_K\delta^2<1/2$. Then
$\widehat F_i\ge\rho/2$ there; if $L_i(\tau)<\delta$, continuity at
the opposite pole would give $0=\widehat F_i(L_i(\tau),\tau)
\ge L_i(\tau)/2$, a contradiction. Thus $L_i(\tau)\ge\delta$
uniformly, and the displayed estimates hold on $[0,\delta]$.
Shi's bounds and
$\partial_\rho^{m+2}\widehat F_i=-\partial_\rho^m(H_i\widehat F_i)$
control every radial derivative up to the pole.

For the pole-centered arclength velocity $v_i=\partial_\tau\rho|_x$,
equation~\eqref{eq:2.1} gives the nonsingular formula
\[
 v_i=-(n-1)\int_0^\rho H_i(\sigma,\tau)\,d\sigma.
\]
The curvature evolution equation and Shi's estimates bound temporal
and spatial derivatives of $H_i$ at fixed orbit labels. In the moving
pole gauge,
\[
 \left.\partial_\tau H_i\right|_\rho
 =\left.\partial_\tau H_i\right|_x-v_i\partial_\rho H_i.
\]
The formula for $v_i$ and induction therefore bound all mixed
derivatives of $H_i$ on $[0,\delta]\times I$. Differentiating the
Jacobi integral equation, whose initial data at the pole are
independent of time, gives the corresponding bounds for
$\widehat F_i$. Its smooth odd pole extension preserves these bounds,
so
\[
 \sup_{[0,\delta]\times I}|\partial_\rho^m\partial_\tau^b\widehat F_i|
 \le C_{m,b}\qquad(m,b\ge0).
\]
Arzel\`a--Ascoli gives smooth convergence up to the pole; Lemma~\ref{lem:8.2} covers adjoining annuli.
\end{proof}

\subsection{Global rotational structure}
\begin{proposition}[Rotational structure of tangent flows]\label{prop:8.4}
Every tangent flow at a Type-I singular point is a complete connected cohomogeneity-one rotational Ricci flow. After passing to compatible radial gauges on compact spacetime subsets, its time slices can be written as
\begin{equation}
 g_\infty(\tau)=dr^2+F_\infty(r,\tau)^2g_{\mathbb{S}^{n-1}},
 \label{eq:8.9}
\end{equation}
and
\begin{equation}
 A_i\longrightarrow A_\infty
 \label{eq:8.10}
\end{equation}
locally smoothly in those gauges.
\end{proposition}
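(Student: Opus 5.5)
The plan is to build the limit globally by patching the two local compactness lemmas along the orbit space, and then to identify the patched limit with the Enders--M\"uller--Topping tangent flow. Fix a tangent sequence $g_i$ from \eqref{eq:8.2} and a compact interval $I\Subset(-\infty,0)$. By Lemma~\ref{lem:8.1}, the base orbit radius $F_i(p,\tau)$ is uniformly bounded on $I$. After a subsequence, either $F_i(p,\tau_0)\to0$, or it stays in a compact subinterval of $(0,\infty)$.

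In the first case, Lemma~\ref{lem:8.3} recenters at a fixed pole and gives smooth pole charts. In the second case, Lemma~\ref{lem:8.2} gives a regular chart around $p$. I would then continue radially. Fix $R>0$ and cover the signed radial interval $[-R,R]$ (or $[0,R]$ from a pole) by finitely many charts. Along a radial path, $F_i$ is locally Lipschitz in $\arsinh$ form as in the proof of Lemma~\ref{lem:8.1}, so it stays bounded above on $[-R,R]$. Wherever $F_i$ stays bounded below, Lemma~\ref{lem:8.2} applies. Wherever $F_i\to0$ at some radial point, Lemma~\ref{lem:8.3} shows that this point is within $o(1)$ of a pole; there I switch to the pole gauge, and the orbit space terminates. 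A diagonal subsequence over $R\to\infty$ and an exhaustion of $(-\infty,0)$ by compact intervals gives a limiting warping function $F_\infty(r,\tau)$ on an interval $J_\tau$. This interval is either $\mathbb R$, a half-line ending at a pole, or a compact interval with two poles. The overlaps of the charts are handled by the uniqueness of signed arclength: the gauges differ by translations in $r$ that converge. This yields \eqref{eq:8.9}.

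Next I would match this with the Cheeger--Gromov limit of \cite{EMT}. The rotational charts give explicit diffeomorphisms from $J\times\mathbb S^{n-1}$ (with poles collapsed) onto pointed balls in $(M,g_i)$. Under these maps the pulled-back metrics converge smoothly, so the EMT limit is isometric to the warped product. That makes the limit complete and connected, and it carries an isometric $SO(n)$ action with principal orbits $\mathbb S^{n-1}$, that is, cohomogeneity one. Smoothness at a limiting pole follows from $\widehat F_\infty(0)=0$ and $(\widehat F_\infty)_\rho(0)=1$, together with the odd extension in Lemma~\ref{lem:8.3}. Completeness follows because each end of $J$ is either infinite or a smooth pole. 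The Ricci flow equation passes to the limit through the evolution formulas used in Lemma~\ref{lem:8.2}.

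For \eqref{eq:8.10}: away from poles, $A_i=1-(F_i)_r^2+F_i(F_i)_{rr}$ is a polynomial in the jets of $F_i$, so it converges smoothly. Near a pole I write $A_i$ in the pole gauge. Here $\widehat F_i$ converges smoothly as an odd function, and $A_i$ is even and smooth in $\rho$, so smooth convergence holds up to the pole too. Scale invariance \eqref{eq:2.4} means $A_i$ is exactly the rescaled anisotropy, and Proposition~\ref{prop:7.1} gives the uniform bound.

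The main obstacle is the global patching. I need to show that the chain of charts cannot degenerate, for instance by $F_i\to0$ at an interior point that is not a pole, and that the moving radial gauges, including their time-dependent velocities $v_i$, are compatible in the limit. I would handle the first issue with Lemma~\ref{lem:8.3}, which forces any such point toward a pole. For the second, I would fix the radial origin at the base orbit or pole and integrate $v_i$ from there, using the uniform bounds on $H_i$.
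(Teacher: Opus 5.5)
Your proposal is correct and follows essentially the same route as the paper. The shared ingredients are the $\arsinh$ bound on $F_i$ over bounded radial intervals, regular charts from Lemma~\ref{lem:8.2} where the orbit radius stays positive, pole charts from Lemma~\ref{lem:8.3} where it collapses, diagonal extraction with gluing by radial translations, the endpoint analysis of the orbit interval, and convergence of $A_i=1-(F_i)_r^2+F_i(F_i)_{rr}$ as a polynomial in the jets of $F_i$. The only difference is bookkeeping: you cover the orbit interval $[-R,R]$ first and identify the glued warped product with the Enders--M\"uller--Topping limit afterwards by uniqueness of pointed limits, whereas the paper charts that limit directly through the orbits of the points $\Psi_i(q)$. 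The paper's version gets a finite cover for free from compact balls of the limit; in yours, the finite cover comes from the uniform lower bound on $F_i$ away from the poles that \eqref{eq:8.8} provides.
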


\begin{proof}
Fix $I\Subset(-\infty,0)$, and let $C_0$ bound $F_i(p,\tau)$ on $I$ by Lemma~\ref{lem:8.1}. Let $r$ be orbit-space arclength from the orbit of $p$, signed when that orbit is regular and one-sided when $p$ is a pole. Choose $K\ge1$ bounding the absolute sectional curvature on $I$. The tangential curvature formula gives $|(F_i)_r|\le\sqrt{1+K F_i^2}$, hence, for $|r|\le R$,
\[
 \arsinh(\sqrt K F_i(r,\tau))
 \le\arsinh(\sqrt K C_0)+\sqrt K R,
 \qquad F_i(r,\tau)\le C_R.
\]
For $q$ on the orbit at $r$,
\[
 |r|\le d_{g_i(\tau)}(p,q)\le |r|+\pi F_i(p,\tau)
 \le |r|+\pi C_0.
\]
Thus bounded pointed balls correspond to bounded radial intervals,
including whole orbits.

Let $\Psi_i$ be pointed smooth convergence embeddings supplied by
\cite{EMT}. For a point $q$ of the limit at time $\tau_0$, let $Q_i$
be the orbit through $\Psi_i(q)$. After a subsequence,
$F_i(Q_i,\tau_0)\to b\ge0$; metric distortion preserves this
alternative over $I$. Lemma~\ref{lem:8.2} gives a regular chart if
$b>0$, and Lemma~\ref{lem:8.3} a pole chart if $b=0$. In the latter
case,~\eqref{eq:8.8} gives
\[
 d_{g_i(\tau)}(Q_i,\{P_i^-,P_i^+\})\le2F_i(Q_i,\tau).
\]
Thus zero-radius points are poles, isolated by
$\widehat F_\infty(\rho,\tau)=\rho+O(\rho^3)$. For each integer
$R>0$ and each compact time interval, choose finitely many limit
points whose regular or pole neighborhoods cover the closed pointed
$R$-ball. Apply the two compactness lemmas to their representing orbits
and to every overlap on a common subsequence. Because these
neighborhoods lie in a larger pointed ball, their limits identify with
open subsets of the limit defined by $\Psi_i$. Diagonal extraction over
$R$ and the time intervals gives compatible charts. Choose their
origins according to the original ordered orbit labels, and use one global angular
trivialization of $\mathbb S^n$ minus its two poles throughout this
diagonal construction. An orbit meeting $B_{g_i(\tau)}(p,R)$ has
$|r|\le R$, and the preceding full-orbit bound places its entire
spherical fiber in $B_{g_i(\tau)}(p,R+\pi C_0)$. Thus the limit sees
each full round $\mathbb S^{n-1}$ fiber, without an angular quotient.
On regular overlaps,
\[
 (r_\beta,\theta_\beta)
 =\bigl(\pm r_\alpha+c_{\alpha\beta}(\tau),
 Q_{\alpha\beta}\theta_\alpha\bigr),\qquad
 F_\beta(r_\beta,\tau)=F_\alpha(r_\alpha,\tau),
\]
where $Q_{\alpha\beta}\in O(n)$ is fixed by the original angular
trivialization. The inherited order makes the limiting orbit space an
interval. A finite endpoint with positive $F_\infty$ extends by
Lemma~\ref{lem:8.2}; any finite endpoint is therefore a smooth pole by
Lemma~\ref{lem:8.3}. The charts glue to a global rotational warped-product
structure on the pointed limit, with one time-independent rotational
action. In each radial chart,
\[
 A_i=1-(F_i)_r^2+F_i(F_i)_{rr}
 \longrightarrow 1-(F_\infty)_r^2+F_\infty(F_\infty)_{rr}=A_\infty
\]
smoothly.
\end{proof}

\section{Kernel convergence and vanishing dissipation}\label{app:kernels}
We retain the assumptions and notation of Section~\ref{sec:application}.
Fix $u\in\mathcal U_p$ and rescale it by~\eqref{eq:9.6} along a tangent
sequence. Appendix~\ref{app:geometry} supplies the rotational structure
and smooth convergence of $A_i$ used below.

\subsection{Kernel compactness and tail control}
\begin{lemma}[Compactness of the rescaled kernels]\label{lem:9.3}
After passing to a subsequence along any pointed Type-I blow-up, the functions $u_i$ converge locally smoothly on $M_\infty\times(-\infty,0)$ to a positive solution $u_\infty$ of the conjugate heat equation of the limit flow.
\end{lemma}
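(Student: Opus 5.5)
The plan is to obtain uniform local bounds on $u_i$ and all their derivatives on compact subsets of the pointed limit, then use Arzel\`a--Ascoli with a diagonal extraction, and finally prove positivity through a Harnack inequality.

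\textbf{Step 1: scale-invariant heat-kernel bounds.} Each $u\in\mathcal U_p$ is a locally smooth limit of conjugate heat kernels $H_{(p,s_j)}$ with $s_j\uparrow T$. Under the Type-I bound~\eqref{eq:8.1}, Mantegazza--M\"uller~\cite{MM} give Gaussian-type bounds for such kernels. I will use them in the form
\[
 u(x,t)\le C\,(T-t)^{-n/2}\exp\!\Bigl(-\frac{d_t(p,x)^2}{C(T-t)}\Bigr),
\]
together with a matching lower bound near $p$; the upper bound also follows from the normalization and the Type-I distance distortion. This bound is invariant under the rescaling~\eqref{eq:9.6}. Consequently, on $B_{g_i(\tau)}(p,R)\times I$ with $I\Subset(-\infty,0)$ we get $u_i\le C(R,I)$ uniformly in $i$.

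\textbf{Step 2: higher regularity.} The rescaled flows have uniformly bounded curvature and all its derivatives on $I$, by Type-I control and Shi's estimates~\cite{Shi}. I will pull back by the convergence embeddings $\Psi_i$ from~\cite{EMT}. Parabolic interior estimates for the linear equation $\partial_\tau u_i=-\Delta_{g_i}u_i+R_{g_i}u_i$, applied backward in time, then bound all mixed derivatives of $\Psi_i^*u_i$ on compact spacetime sets. Arzel\`a--Ascoli and diagonalization over exhausting sets give smooth local convergence to a nonnegative $u_\infty$, which solves the conjugate heat equation of $g_\infty$ because the coefficients converge smoothly.

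\textbf{Step 3: positivity.} A lower bound $u_i(p,\tau)\ge c(\tau)>0$ at the base point suffices. Then the Harnack inequality for the conjugate heat equation (or the strong maximum principle for the backward equation) gives $u_\infty>0$ everywhere on the connected limit. To get the lower bound I would use Perelman's reduced-distance bound $u\ge(4\pi(T-t))^{-n/2}e^{-\ell}$ for kernels, which passes to singular-time limits as in~\cite{MM}. Under Type-I curvature, $\ell_{(p,T)}(p,t)$ is bounded, since the constant path has $\mathcal L$-length controlled by $\int\sqrt{T-t}\,R\,dt\lesssim\sqrt{T-t}$. This bound is scale invariant, so it gives the uniform lower bound at $p$.

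I expect the main obstacle to be this uniform lower bound. It requires carrying the reduced-distance estimate through the limit $s_j\uparrow T$ that defines $u$, so I would rely on the Mantegazza--M\"uller framework there.
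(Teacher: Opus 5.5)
Your proposal is correct and follows essentially the paper's route: the rescaled Mantegazza--M\"uller Gaussian bounds \cite{MM} give uniform local control of $u_i$, Shi's estimates and interior parabolic regularity (in reversed time) give locally smooth subsequential convergence, and a lower bound gives positivity. The only difference is in the positivity step. You prove the lower bound only at the base point, via Perelman's bound $H_{(p,s_j)}\ge(4\pi(s_j-t))^{-n/2}e^{-\ell}$ and the constant-path estimate $\ell_{(p,s_j)}(p,\cdot)\le C$, and then spread positivity by the strong maximum principle; the paper instead takes the two-sided Gaussian bounds of \cite{MM} on compact pointed sets directly. The constant-path estimate is uniform in $j$ because $T-(s_j-\sigma)\ge\sigma$, so the obstacle you anticipate does not actually arise.
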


\begin{proof}
The rescaled Gaussian bounds of~\cite[Propositions~2.7--2.8]{MM}
control $u_i$ above and below on compact pointed sets. Shi's
estimates~\cite{Shi} and interior parabolic regularity give locally
smooth subsequential convergence; the lower bound gives positivity.
\end{proof}

\begin{lemma}[Uniform tail control]\label{lem:9.4}
Fix a compact interval $J\Subset(-\infty,0)$ and discard finitely many indices so that all rescaled flows are defined on $J$. Then, for every $\tau\in J$,
\begin{equation}
 \lim_{R\to\infty}\sup_i
 \int_{M\setminus B_{g_i(\tau)}(p,R)}u_i\,d\mu_{g_i(\tau)}=0,
 \label{eq:9.8}
\end{equation}
and
\begin{equation}
 \lim_{R\to\infty}\sup_i
 \int_{M\setminus B_{g_i(\tau)}(p,R)}|A_i|^{p_*}u_i\,d\mu_{g_i(\tau)}=0.
 \label{eq:9.9}
\end{equation}
The convergence as $R\to\infty$ is uniform for $\tau\in J$.
\end{lemma}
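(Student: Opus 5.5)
The plan is to reduce both tail estimates to Gaussian upper bounds for the singular-time kernels, uniform after rescaling, together with volume comparison on the rescaled flows. First, \eqref{eq:9.9} follows from \eqref{eq:9.8}. By Proposition~\ref{prop:7.1} we have $|A|\le\|A(\cdot,0)\|_\infty$ on the original flow, and $A$ is invariant under parabolic rescaling by~\eqref{eq:2.4}. Hence
\[
 |A_i|^{p_*}\le\|A(\cdot,0)\|_\infty^{p_*}=:\Lambda
\]
on every rescaled flow. The integrand in~\eqref{eq:9.9} is therefore bounded by $\Lambda u_i$, and the supremum over $i$ of the tail integral is at most $\Lambda$ times the corresponding supremum in~\eqref{eq:9.8}. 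Uniformity in $\tau\in J$ transfers in the same way. Only~\eqref{eq:9.8} remains.

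For~\eqref{eq:9.8} I will use the Mantegazza--M\"uller Gaussian upper bound~\cite[Propositions~2.7--2.8]{MM}, already invoked in Lemma~\ref{lem:9.3}. Under the Type-I bound~\eqref{eq:8.1}, a singular-time kernel based at $(p,T)$ satisfies
\[
 u(y,t)\le\frac{C}{(T-t)^{n/2}}
 \exp\Bigl(-\frac{d_{g(t)}(p,y)^2}{C(T-t)}\Bigr).
\]
Since this bound is scale invariant, the rescaling~\eqref{eq:9.6} gives, uniformly in $i$ and $\tau\in J$,
\[
 u_i(y,\tau)\le C|\tau|^{-n/2}
 \exp\Bigl(-\frac{d_{g_i(\tau)}(p,y)^2}{C|\tau|}\Bigr).
\]
For $\tau\in J$ the factor $|\tau|$ is bounded above and below, so $u_i\le C_J e^{-d^2/C_J}$ with $d=d_{g_i(\tau)}(p,\cdot)$.

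Next I need polynomial (or at most subgaussian) growth of $\Vol_{g_i(\tau)}B(p,\rho)$, uniform in $i$. The rescaled sectional curvatures are bounded by $C_I/|\tau|\le K_J$ on $J$, so Bishop--Gromov gives
\[
 \Vol B(p,\rho)\le c_n\,\rho^{n}e^{(n-1)\sqrt{K_J}\rho}.
\]
This growth is exponential, not Gaussian, so the Gaussian factor dominates. Writing the tail as a sum over annuli $B(p,R+k+1)\setminus B(p,R+k)$, we get
\[
 \int_{M\setminus B(p,R)}u_i\,d\mu
 \le C_J\sum_{k\ge0}(R+k+1)^n
 e^{C(R+k+1)-(R+k)^2/C_J},
\]
which tends to $0$ as $R\to\infty$, independently of $i$ and $\tau\in J$.

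As an alternative route that avoids the rotational geometry, one could use the orbit-radius bound of Lemma~\ref{lem:8.1} and the radial reduction from Proposition~\ref{prop:8.4}, where $\Vol$ of the shell at radius $r$ is $|\mathbb S^{n-1}|F_i^{n-1}$ with $F_i\le C_R$. The Bishop--Gromov route is simpler and is the one I will take.

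The main obstacle I expect is the precise form and uniformity of the Gaussian upper bound in~\cite{MM}. Their estimate may be stated with constants depending on $C_I$ and with the distance taken at a shifted time. If so, I will use Type-I distance distortion: under $|\Rm|\le C/|\tau|$, distances at comparable times on $J$ are related by $d_{g_i(\tau')}\ge d_{g_i(\tau)}-C\sqrt{|\tau|}$, a Hamilton-type estimate. This converts the bound to a distance at the same time $\tau$, at the cost of a harmless additive constant in the exponent. If only an integrated Gaussian bound is available, I will instead apply Perelman's reduced-distance lower bound for $u$ in the form of Mantegazza--M\"uller, giving $u\le C|\tau|^{-n/2}e^{-\ell}$, together with the Type-I quadratic lower bound $\ell\ge d^2/(C|\tau|)-C$.
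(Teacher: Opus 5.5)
Your argument is essentially the paper's proof: the rescaled Mantegazza--M\"uller Gaussian upper bound $u_i\le C\exp(-c\,d_{g_i(\tau)}(p,\cdot)^2)$ on $J$, exponential volume growth from Bishop comparison under the rescaled Type-I curvature bound, and summation over annuli. The weighted tail then follows from the uniform bound on $|A_i|$ given by Proposition~\ref{prop:7.1} and scale invariance; you merely do this reduction first. One caution concerns only your fallback: Perelman's reduced-distance comparison $f\le\ell$ gives a \emph{lower} bound $u\ge(4\pi|\tau|)^{-n/2}e^{-\ell}$, not an upper bound, so that route could not replace the Gaussian upper bound---but your main line does not need it.
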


\begin{proof}
The Gaussian upper bound of Mantegazza--M\"uller~\cite[Proposition 2.8]{MM} rescales, uniformly for $i$ and $\tau\in J$, to
\begin{equation}
 u_i(x,\tau)\le C\exp\bigl(-c\,d_{g_i(\tau)}(p,x)^2\bigr),
 \qquad \tau\in J.
 \label{eq:9.10}
\end{equation}
The Type-I bound gives $\Ric_{g_i(\tau)}\ge-Kg_i(\tau)$; Bishop comparison yields, uniformly in $i$ and $\tau\in J$,
\begin{equation}
 \Vol_{g_i(\tau)}B(p,R)\le C_0e^{C_0R},\qquad R\ge1,
 \label{eq:9.11}
\end{equation}
Summing over the annuli $B(p,k+1)\setminus B(p,k)$ gives
\[
 \sup_{i,\tau\in J}
 \int_{M\setminus B_{g_i(\tau)}(p,R)}u_i\,d\mu_{g_i(\tau)}
 \le C\sum_{k\ge\lfloor R\rfloor}e^{-ck^2+C_0(k+1)}\longrightarrow0.
\]
This is~\eqref{eq:9.8}. Since $|A_i|\le C_A$ by Proposition~\ref{prop:7.1} and scale invariance, the weighted tail is at most $C_A^{p_*}$ times the mass tail, proving~\eqref{eq:9.9}.
\end{proof}

\subsection{Preservation of mass and anisotropic entropy}
\begin{theorem}[No loss of critical anisotropy entropy]\label{thm:9.5}
Let a subsequence of $(M,g_i(\tau),p)$ converge smoothly to a tangent flow $(M_\infty,g_\infty(\tau),p_\infty)$, and choose the kernel subsequence supplied by Lemma~\ref{lem:9.3}. Then
\begin{equation}
 \int_{M_\infty}u_\infty\,d\mu_{g_\infty(\tau)}=1
 \label{eq:9.12}
\end{equation}
for every $\tau<0$, and
\begin{equation}
 \Theta_A(p;u)
 =\int_{M_\infty}|A_\infty|^{p_*}u_\infty\,d\mu_{g_\infty(\tau)}.
 \label{eq:9.13}
\end{equation}
In particular, the right-hand side is independent of $\tau$.
\end{theorem}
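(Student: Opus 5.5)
The plan is to pass to the limit in the rescaled identities at a fixed time $\tau<0$. The argument splits into a compact part, handled by local smooth convergence, and a tail, handled by Lemma~\ref{lem:9.4}. Fix $\tau<0$ and a compact interval $J\ni\tau$ in $(-\infty,0)$. By~\eqref{eq:9.2} and~\eqref{eq:9.6}, $\int_M u_i\,d\mu_{g_i(\tau)}=1$ for all large $i$. By~\eqref{eq:9.7}, $\int_M|A_i|^{p_*}u_i\,d\mu_{g_i(\tau)}=\mathcal E_{A,p,u}(T+\tau/\lambda_i)$. Since $\lambda_i\to\infty$, the time $T+\tau/\lambda_i\uparrow T$, so by Definition~\ref{def:9.1} this converges to $\Theta_A(p;u)$. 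The limit exists because $\mathcal E_A$ is monotone (Theorem~\ref{thm:5.2}) and bounded.

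Given $\varepsilon>0$, use Lemma~\ref{lem:9.4} to choose $R$ so that the mass tail and the weighted tail outside $B_{g_i(\tau)}(p,R)$ are both below $\varepsilon$, uniformly in $i$. Let $\Psi_i$ be the convergence embeddings of the pointed limit. For large $i$, $\Psi_i(B_{g_\infty(\tau)}(p_\infty,R+1))$ contains $B_{g_i(\tau)}(p,R)$ and lies inside $B_{g_i(\tau)}(p,R+2)$. On the closure of $B_{g_\infty(\tau)}(p_\infty,R+1)$, the following converge uniformly: $\Psi_i^*g_i\to g_\infty$ smoothly, $u_i\circ\Psi_i\to u_\infty$ (Lemma~\ref{lem:9.3}), and $A_i\circ\Psi_i\to A_\infty$ (Proposition~\ref{prop:8.4}). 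The map $z\mapsto|z|^{p_*}$ is continuous, so uniform convergence of $A_i$ carries over to $|A_i|^{p_*}$. Hence the integrals of $u_i$ and of $|A_i|^{p_*}u_i$ over $\Psi_i(B_{R+1})$ converge to the corresponding limit integrals over $B_{g_\infty(\tau)}(p_\infty,R+1)$.

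Combining the two parts gives
\[
 1-\varepsilon\le\int_{B_{g_\infty(\tau)}(p_\infty,R+1)}u_\infty\,d\mu_{g_\infty(\tau)}\le1
\]
and
\[
 \Bigl|\int_{B_{g_\infty(\tau)}(p_\infty,R+1)}|A_\infty|^{p_*}u_\infty\,d\mu_{g_\infty(\tau)}-\Theta_A(p;u)\Bigr|\le\varepsilon .
\]
Let $R\to\infty$ and then $\varepsilon\to0$. Since $u_\infty>0$ and $|A_\infty|^{p_*}u_\infty\ge0$, monotone convergence over exhausting balls yields~\eqref{eq:9.12} and~\eqref{eq:9.13}. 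The right side of~\eqref{eq:9.13} equals the $\tau$-independent number $\Theta_A(p;u)$, so the final assertion follows immediately.

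The main obstacle is the identification of sets, namely the comparison of balls under $\Psi_i$ near the boundary. To handle it I would use the fact that $\Psi_i^*g_i\to g_\infty$ uniformly on compact sets, so distances are distorted by at most $1+o(1)$. The sandwich inclusions above then hold for large $i$. The only region between the two balls is the annulus outside $B_{g_i(\tau)}(p,R)$, and there the integrands are controlled by the uniform tail estimate. No further compactness is needed, since all the analytic input is in Lemmas~\ref{lem:9.3} and~\ref{lem:9.4} and Proposition~\ref{prop:8.4}.
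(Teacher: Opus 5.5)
Your proposal is correct and is essentially the paper's argument. You fix $\tau$, identify the total integrals of $u_i$ and $|A_i|^{p_*}u_i$ via \eqref{eq:9.6}--\eqref{eq:9.7}, pass to the limit on a fixed pointed ball using local smooth convergence, and control the complement uniformly in $i$ with Lemma~\ref{lem:9.4} through the sandwich inclusions of balls under $\Psi_i$. The only differences are cosmetic: you integrate over the pulled-back balls directly (legitimate, since $\Psi_i^*g_i\to g_\infty$ uniformly on the compact closure) where the paper uses a smooth cutoff $\chi_R$, and you fix $\varepsilon$ before $R$ rather than carrying the tail modulus $\eta(R)$.
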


\begin{proof}
Fix $\tau<0$. Let $\Psi_i:U_i\to M$ be pointed convergence maps, and choose $\chi_R\in C_c^\infty(M_\infty)$ with
\[
 0\le\chi_R\le1,\qquad
 \chi_R\equiv1\ \text{on }B_\infty(p_\infty,R),\qquad
 \supp\chi_R\subset B_\infty(p_\infty,R+1).
\]
For fixed $R$ and large $i$, $\supp\chi_R\subset U_i$.
For $h_i=u_i$ or $h_i=|A_i|^{p_*}u_i$, let $h_\infty$ be the corresponding limit. Smooth convergence of $A_i,u_i$ and continuity of $z\mapsto|z|^{p_*}$ give local uniform convergence of $h_i$. Thus, with the pullback cutoff extended by zero,
\[
 \int_{M_\infty}\chi_Rh_\infty\,d\mu_\infty
 =\lim_{i\to\infty}\int_M(\chi_R\circ\Psi_i^{-1})h_i\,d\mu_i.
\]
For fixed $R$ and large $i$, pointed smooth convergence gives
\[
 B_{g_i(\tau)}(p,R-1)\subset\Psi_i(B_\infty(p_\infty,R))
 \subset B_{g_i(\tau)}(p,R+1).
\]
Hence, for either choice of $h_i$,
\[
 0\le\int_Mh_i\,d\mu_i
 -\int_M(\chi_R\circ\Psi_i^{-1})h_i\,d\mu_i
 \le\int_{M\setminus B_{g_i(\tau)}(p,R-1)}
 h_i\,d\mu_i\le\eta(R),
\]
where $\eta(R)\to0$ by Lemma~\ref{lem:9.4}. Consequently,
\[
 0\le\lim_{i\to\infty}\int_Mh_i\,d\mu_i
       -\int_{M_\infty}\chi_Rh_\infty\,d\mu_\infty\le\eta(R).
\]
Letting $R\to\infty$ therefore gives
\[
 \int_{M_\infty}u_\infty\,d\mu_\infty=1,
 \qquad
 \int_{M_\infty}|A_\infty|^{p_*}u_\infty\,d\mu_\infty
 =\lim_{i\to\infty}\int_M|A_i|^{p_*}u_i\,d\mu_i.
\]
By~\eqref{eq:9.7}, the last limit is $\Theta_A(p;u)$. This proves~\eqref{eq:9.12}--\eqref{eq:9.13} for each $\tau<0$.
\end{proof}

\subsection{The weak limit of the dissipation identity}
The dissipation formula is invariant under Type-I rescaling: if $D_i$ is~\eqref{eq:3.6} computed in $g_i$, then
\[
 D_i\Phi_i=\lambda_i^{-1/2}D\Phi,
\]
and the spacetime integral of $|D\Phi|^2u$ is unchanged.

\begin{proposition}[Caloricity from vanishing entropy drop]\label{prop:10.1}
For every tangent flow obtained at $p$, the bounded function
$q_\infty:=|A_\infty|^{p_*}$ is caloric:
\begin{equation}
 (\partial_\tau-\Delta_{g_\infty(\tau)})q_\infty=0
 \quad\text{on }M_\infty\times(-\infty,0).
 \label{eq:10.1}
\end{equation}
Moreover $D_\infty\Phi_\infty=0$ in the weak orbit-variable sense on
every time slice, including across the zero set of $A_\infty$.
\end{proposition}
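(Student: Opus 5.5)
We aim to pass to the limit in the integrated dissipation identity along the tangent sequence. There are three steps: vanishing of the rescaled dissipation, identification of its weak limit, and removal of the weight $u_\infty$.

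\textbf{Vanishing dissipation.} Fix $\tau_1<\tau_2<0$. By the scale invariance $D_i\Phi_i=\lambda_i^{-1/2}D\Phi$ and the defect formula~\eqref{eq:9.defect},
\[
 \frac4{n-1}\int_{\tau_1}^{\tau_2}\int_M|D_i\Phi_i|^2u_i\,d\mu_{g_i}\,d\tau
 \le\mathcal E_{A,p,u}\bigl(T+\tau_1/\lambda_i\bigr)-\Theta_A(p;u)\to0 .
\]
We work on a compact chart $K$ of the limit. This is either a regular chart from Lemma~\ref{lem:8.2} or a pole chart from Lemma~\ref{lem:8.3}, assembled into the global structure of Proposition~\ref{prop:8.4}. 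There $u_i\ge c_K>0$ uniformly by Lemma~\ref{lem:9.3}, and $F_i^{n-1}$ is comparable in the regular case (in the pole case $d\mu$ carries the factor $\rho^{n-1}$). Hence $\int_{K\times[\tau_1,\tau_2]}|D_i\Phi_i|^2\,d\mu_{g_i}\,d\tau\to0$.

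\textbf{Weak limit.} Since $A_i\to A_\infty$ locally smoothly, $\Phi_i\to\Phi_\infty$ locally uniformly. The same holds for $F_i^{n-1}\Phi_i\to F_\infty^{n-1}\Phi_\infty$, and $F_i^{-(n-1)}$ is bounded on regular charts. From $D_i\Phi_i=F_i^{-(n-1)}\partial_r(F_i^{n-1}\Phi_i)$ we get $\partial_r(F_i^{n-1}\Phi_i)\to0$ in $L^2$ of spacetime on regular charts. Testing against $\partial_r\eta$ and using the uniform convergence, the distributional derivative $\partial_r(F_\infty^{n-1}\Phi_\infty)$ vanishes on $K\times[\tau_1,\tau_2]$. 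By lower semicontinuity this also gives $D_\infty\Phi_\infty\in L^2$ with zero norm.

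To get a statement on \emph{every} slice rather than almost every $\tau$, note that $F_\infty^{n-1}\Phi_\infty$ is continuous in $(r,\tau)$. Its vanishing $r$-derivative for a.e.\ $\tau$ means it is constant in $r$ for a.e.\ $\tau$, and continuity then extends this to all $\tau$. Near a pole we use the chart of Lemma~\ref{lem:8.3}. Away from $\rho=0$ the regular argument applies on annuli, and $F_\infty^{n-1}\Phi_\infty=C(\tau)$ with $F_\infty(0)=0$ and $\Phi_\infty$ bounded forces $C=0$, so no pole term arises. Zeros of $A_\infty$ need no special care, since the argument uses only $F^{n-1}\Phi$, which is continuous. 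The weak meaning agrees with Lemma~\ref{lem:3.3} because $\Phi_\infty\in W^{1,2}_{\mathrm{loc}}$ by that lemma applied to the smooth limit flow.

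\textbf{Caloricity.} The limit is a smooth rotational Ricci flow, so Lemma~\ref{lem:3.3} applies to it and gives $(\partial_\tau-\Delta)q_\infty=-\frac4{n-1}|D_\infty\Phi_\infty|^2=0$ distributionally; smoothness of $q_\infty$ away from zeros, or parabolic regularity, then yields~\eqref{eq:10.1}. A subtlety is that the radial gauges in Proposition~\ref{prop:8.4} move in time, so the time derivative must be taken at fixed orbit label. This is handled by the velocity formula for $v_i$ in Lemmas~\ref{lem:8.2}--\ref{lem:8.3}, and does not affect the spatial statement $D_\infty\Phi_\infty=0$.

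\textbf{Expected obstacle.} The main difficulty is the uniform lower bound on $u_i$ together with the uniform control of the volume factor near poles, since that is what removes the weight. For this we rely on the Mantegazza--M\"uller lower Gaussian bound as rescaled in Lemma~\ref{lem:9.3}.
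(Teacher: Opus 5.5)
Your proposal is correct, but it runs the argument in the opposite order from the paper. The paper passes the second-order square identity~\eqref{eq:3.4} to the limit. Since $|A_i|^{p_*}\to q_\infty$ locally uniformly and the local weighted dissipation~\eqref{eq:10.localdefect} tends to zero, it obtains caloricity of $q_\infty$ first. Lemma~\ref{lem:3.3} on the limit flow then yields $|D_\infty\Phi_\infty|^2=0$ only almost everywhere in spacetime. The upgrade to every slice uses continuity of $D_\infty\Phi_\infty$ on the open set $\{A_\infty\neq0\}$, together with the Sobolev fact that the weak derivative of $\Phi_\infty$ vanishes a.e.\ on $\{\Phi_\infty=0\}$. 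You instead pass the first-order divergence form $F^{n-1}D\Phi=\partial_r(F^{n-1}\Phi)$ to the limit, which needs only uniform convergence of $\Phi_i$ and $F_i$ in the radial charts. You get every slice at once from continuity of $F_\infty^{n-1}\Phi_\infty$ in $(r,\tau)$, and caloricity comes last, from Lemma~\ref{lem:3.3} applied to the limit. Your route has three advantages: no second-order operator has to be carried through the limit, no level-set argument is needed, and your pole step ($C(\tau)=0$ on annuli) already contains the pole case of Theorem~\ref{thm:6.2}. The price is that you depend on the explicit regular/pole atlas of Proposition~\ref{prop:8.4}, and on the annulus device to avoid the degenerating volume factor at a pole. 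The paper's limit passage, by contrast, is intrinsic: it needs only the pulled-back heat operators to converge. Finally, your remark about the gauge velocity $v_i$ is unnecessary. Proposition~\ref{prop:8.4} gives a time-independent rotational action on the limit, so Lemma~\ref{lem:3.3} applies there directly in orbit-label time.
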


\begin{proof}
Fix $\tau_1<\tau_2<0$. The defect identity~\eqref{eq:9.defect}
is invariant under the parabolic rescaling and gives the full
remaining dissipation tail
\begin{align}
 &\mathcal{E}_{A,p,u}\left(T+\frac{\tau_1}{\lambda_i}\right)
 -\Theta_A(p;u)\notag\\
 &\qquad=\frac4{n-1}\int_{\tau_1}^{0}\int_M
 |D_i\Phi_i|^2u_i\,d\mu_{g_i(\tau)}\,d\tau
 \longrightarrow0.
 \label{eq:10.2}
\end{align}
On any compact
spacetime set $K\Subset M_\infty\times(\tau_1,\tau_2)$ in the pointed
limit, the rescaled kernels satisfy
$u_i\ge c_K>0$ for large $i$ by Lemma~\ref{lem:9.3}. Pulling back
by the convergence maps and writing the pullbacks with the same
symbols, we obtain
\begin{equation}
 \int_K|D_i\Phi_i|^2\,d\mu_{g_i(\tau)}\,d\tau
 \le \frac{n-1}{4c_K}
 \left[\mathcal E_{A,p,u}\!\left(T+\frac{\tau_1}{\lambda_i}\right)
       -\Theta_A(p;u)\right]\longrightarrow0.
 \label{eq:10.localdefect}
\end{equation}

The weak square identity~\eqref{eq:3.4} holds for each rescaled flow.
The locally smooth convergence $A_i\to A_\infty$ implies
$|A_i|^{p_*}\to q_\infty$ locally uniformly. Passing to distributions
in the square identity therefore gives~\eqref{eq:10.1}; parabolic
regularity makes $q_\infty$ smooth. Applying the same weak identity to
the smooth limit flow then gives
$|D_\infty\Phi_\infty|^2=0$ almost everywhere in spacetime.
On $\{A_\infty\neq0\}$, $D_\infty\Phi_\infty$ is continuous and
therefore vanishes at every time. On each fixed time slice,
$\Phi_\infty\in W^{1,2}_{\mathrm{loc}}$ by Lemma~\ref{lem:3.3};
its weak orbit derivative vanishes almost everywhere on the level set
$\{\Phi_\infty=0\}$. The lower-order term in $D_\infty\Phi_\infty$
also vanishes there. Hence $D_\infty\Phi_\infty=0$ weakly on every
time slice, including at smooth poles.
\end{proof}

\end{document}